\newcommand{\ds}{\displaystyle}
\newcommand{\Om}{\Omega}
\newcommand{\la}{\lambda}
\newcommand{\va}{\varepsilon}
\newcommand{\dis}{\displaystyle}
\theoremstyle{plain}
\newtheorem{theorem}{Theorem}[section]
\newtheorem{lemma}{Lemma}[section]
\newtheorem{proposition}{Proposition}[section]
\numberwithin{equation}{section}
\newcommand{\R}{{\mathbb R}}
\begin{document}
	\title{The Br\'ezis-Nirenberg Result for the Fractional Elliptic Problem with Singular Potential
		\thanks {The research was supported by the Natural Science Foundation of China
			(11101160,11271141) and the China Scholarship Council (201508440330)  }}
	\author{ Lingyu Jin, Lang Li and Shaomei Fang\\\small{Department of Mathematics, South China Agricultural
			University,}
		\\\small{ Guangzhou 510642, P. R. China}
		\\
	}
	\date{}
	\maketitle
	\begin{abstract}
		In this paper, we are
		concerned with the following type of fractional problems:
		$$
		\begin{cases}\dis
		(-\Delta)^{s} u-\mu\frac{u}{|x|^{2s}}-\lambda  u=|u|^{2^*_{s}-2}u+f(x,u), &\text{ in } \Omega,\ \
		\,\\
		u=0\,&\text{ in } \R^N\backslash\Omega
		\end{cases}
		\eqno {(*)} $$
		where  $s\in (0,1)$,   $2^*_{s}=2N/(N-2s)$ is the critical Sobolev exponent, $f(x,u)$ is a lower order perturbation of critical Sobolev nonlinearity. We obtain the existence of the solution for (*) through variational methods. In particular we derive a Br\'ezis-Nirenberg type result when $f(x,u)=0$. 
		 
	\end{abstract}
	
	\bigbreak
	{\bf Key words and phrases}. \ \ Fractional Laplacian,   positive solution, critical Sobolev nonlinearity.
	\bigbreak
	{AMS Classification:} 35J10 35J20 35J60
	\bigbreak

	\section{Introduction}
	Problems involving critical Sobolev nonlinearity have been an interesting topic for a long time.
	In the celebrated paper \cite{BN}, Br\'ezis and Nirenberg  investigated the problem
	\begin{align}\label{1.1j}
	\begin{cases}
	-\Delta u-\lambda u =|u|^{2^*-2}u, &\text{ in } \Omega,\ \
	\,\\
	u=0\,&\text{ on } \partial\Omega 
	\end{cases}\end{align}
	where $\Omega \subset \R^N$ is a bounded domain with smooth boundary, $N\geq 3$, $2^*=\frac{2N}{N-2}$. They proved the following existence and nonexistence results.
	 
	 Let $\lambda_1$ is the first eigenvalue of $-\Delta $ with homogeneous Dirichlet boundary conditions. Then,

	A) if $N\geq 4$, problem (\ref{1.1j}) has a positive solution for $0<\lambda <\lambda_1$;
	
	B) if $N=3$, there exists a constant $\lambda* \in (0, \lambda_1)$ such that problem (1.1) has a positive solution for $\lambda \in (\lambda*,\lambda_1)$;
	
	C) If $N=3$ and $\Omega $ is a ball, then $\lambda*=\frac{1}{4}\lambda_1$, and problem (\ref{1.1j}) has no solution for $\lambda \in (0,\lambda*)$.
	
	Also, Jannelli in \cite{J} obtained the Br\'ezis-Nirenberg existence and nonexistence results for a class of problem with the critical Sobolev nonlinerity and the Hardy term as follows
	\begin{equation}
	\label{1.20}
	\begin{split}
	\begin{cases}\dis
	-\Delta u-\mu\frac{u}{|x|^{2}}-\lambda  u=|u|^{2^*-2}u &\text{ in } \Omega,\ \
	\,\\
	u=0\,&\text{ on } \partial\Omega.
	\end{cases}
	\end{split}
	\end{equation}
	For more results on  (\ref{1.20}), please see \cite{ CP,CP1,Ch,DJP,J,JD} and the references therein. 
 
 Recently  Servadei and Valdinoci \cite{SV} extended the results of \cite{BN}(part A) to the following nonlocal fractional equation
		\begin{align}\label{1.1}
		\begin{cases}
		(-\Delta)^{s} u-\lambda u =|u|^{2^*_{s}-2}u, &\text{ in } \Omega,\ \
		\,\\
		u=0\,&\text{ in } \R^N\backslash\Omega 
		\end{cases}\end{align}
		where $s\in (0,1)$, $(-\Delta)^s$ is the fractional operator, which may be defined 
		\begin{equation}\label{1.4jj}
		(-\Delta )^s u=2c_{N,s}P.V.\int_{\R^N}\frac{u(x)-u(y)}{|x-y|^{N+2s}}dy, \,x\in \R^N
		\end{equation}
	with $c_{N,s}=2^{2s-1}\pi^{-\frac{N}{2}}\frac{\Gamma(\frac{N+2s}{2})}{|\Gamma(-s)|}$	(see \cite{BV}).
		
Motivated by the above papers, in this paper we consider the following nonlocal fractional equation with singular potential
		\begin{align}
		\label{1.4}
		\begin{cases}\dis
		(-\Delta)^{s} u-\mu\frac{u}{|x|^{2s}}-\lambda  u=|u|^{2^*_{s}-2}u+f(x,u), &\text{ in } \Omega,\ \
		\,\\
		u=0\,&\text{ in } \R^N\backslash\Omega
		\end{cases}
	\end{align}
where	$0\in \Omega\subset \R^N$  is a bounded domain with smooth boundary, $N\geq 3$, $0<s<1$,  $(-\Delta)^s$ is the fractional operator defined in (\ref{1.4jj}),
 $2^*=2N/(N-2s)$ is the critical Sobolev exponent, and $f(x,u)$ is a lower order perturbation of critical Sobolev nonlinearity.  Problem (\ref{1.4}) can be considered as doubly critical due to the critical power $|u|^{2^*_{s}-2}$ in the semilinear term and the spectral anomaly of the Hardy potential $\dis\frac{u}{|x|^{2s}}$.
 The fractional framework introduces nontrivial difficulties which have interest in themselves. In this paper
we investigate the Br\'ezis-Nirenberg type result for (\ref{1.4}). Firstly we
   prove the existence of solutions for (\ref{1.4}) through variational methods under some natural assumptions on $f(x,u)$. Secondly we derive a Br\'ezis-Nirenberg type result for  the case $f(x,u)=0$. For other existence results involving (\ref{1.4}), you can refer to \cite{DMP, GS, WY}.
   
Notice  that equation (\ref{1.4}) has a variational structure. The variational functional of (\ref{1.4}) is 
\begin{equation}
\begin{aligned}\label{1.6}
J_{\lambda}(u)&=\frac{1}{2}\Bigl (c_{N,s}\int_{\R^{2N}}\frac{|u(x)-u(y)|^2}{|x-y|^{N+2s}}dxdy-\lambda\int_{\Omega}|u(x)|^2dx-\mu\int_{\Omega}\frac{|u(x)|^2}{|x|^{2s}}dx\Bigl )\\&\,\,-\frac{1}{2^*_s}\int_{\Omega}|u(x)|^{2^*_s}dx-\int_{\Omega}F(x,u)dx,
\end{aligned}
\end{equation}
where 
\begin{equation}
F(x,t)=\int^t_0f(x,\tau)d\tau.
\end{equation}
We say $u(x)$ is a weak solution of (\ref{1.4}) if $u(x)$ satisfy that
\begin{equation}
\begin{aligned}\label{1.5}
&c_{N,s}\int_{\R^{2N}}\frac{(u(x)-u(y))(\phi(x)-\phi(y))}{|x-y|^{N+2s}}dxdy-\mu\int_{\Om}\frac{u(x)\phi(x)}{|x|^{2s}}dx-\lambda\int_{\Omega}u(x)\phi(x)dx
\\&=\int_{\Omega}|u(x)|^{2^*_s-2}u(x)\phi(x)dx+\int_{\Omega} f(x,u)\phi dx,\,\, \phi \in H^s(\R^N) \text{ with } \phi=0, \text { a.e.  in } \R^N\backslash \Omega,\\
&u\in H^s (\R^N) \text{ with } u=0 \text{ a.e. in } \R^N\backslash\Omega.
\end{aligned}
\end{equation}
As a result of  Hardy inequality  \cite{DMP}, it is easy to see that
 \begin{equation}\label{1.4j}
  \Gamma_{N,s}\big (\int_{\R^N}\frac{|u(x)|^{2}}{|x|^{2s}}dx\bigl )\leq  c_{N,s}\int_{\R^N}\frac{|u(x)-u(y)|^2}{|x-y|^{N+2s}}dy, \forall  u\in C_0^\infty (\R^N)，
 \end{equation}
 where \begin{equation}
 \label{1.8j}\Gamma_{N,s}=2^{2s}\frac{\Gamma^2(\frac{N+2s}{4})}{\Gamma^2(\frac{N-2s}{4})}, c_{N,s}=2^{2s-1}\pi^{-\frac{N}{2}}\frac{\Gamma(\frac{N+2s}{2})}{|\Gamma(-s)|}, \end{equation}
  for $0<\mu<\Gamma_{N,s}$, we can define the first eigenvalue $\lambda_{1,\mu}$ of the following problem
	\begin{align}
	\label{1.10j}
	\begin{cases}\dis
	(-\Delta)^{s} u-\mu\frac{u}{|x|^{2s}}=\lambda u   &\text{ in } \Omega,\ \
	\,\\
	u=0\,&\text{ in } \R^N\backslash\Omega
	\end{cases}
	\end{align}
 as in (\ref{2.1}).
In this paper, we consider problem (\ref{1.4}) in the case  $0<\lambda<\lambda_{1,\mu}$. As in the classical case of Laplacian, 
the main difficulty for nonlocal elliptic problems with critical nonlinearity is the lack of compactness. This would cause the functional $J_\lambda$  not to satisfy the Palais-Smale condition.  
Nevertheless,  we are able to use the Mountain-Pass Theorem without the Palais-Smale condition (as given in \cite{BN}, Theorem 2.2) to overcome the lack compactness of Sobolev embedding. 

	Before introducing our main results, we give  some
	notations and assumptions.
	
	{\bf Notations and assumptions:}
	
	Denote $c$ and $C$ as
	arbitrary constants. Let $B_r(x)$ denote a ball centered at $x$ with
	radius $r$, $B_r$ denote a ball centered at 0 with radius $r$ and  $B_r^C=\R^N\setminus B_r$. Let $o_n(1)$ be an infinitely small quantity such that $o_n(1)\rightarrow 0$ as $n\rightarrow \infty$.
	
Let $f(x,u)$ in  (\ref{1.4}) be a Carath\'eodary function $f:\Omega\times\R\rightarrow \R$  satisfying the following conditions:\begin{eqnarray}
f(x,u) \text{ is continuous with respect to } u;\label{1.9}\\
\lim_{u\rightarrow 0} \frac{f(x,u)}{u}=0 \text{ uniformly in } x\in \Omega;\label{1.10}\\
\lim_{u \rightarrow \infty} \frac{f(x,u)}{|u|^{2^*_s-1}}=0 \text{ uniformly in } x\in \Omega.\label{1.11}
\end{eqnarray}
(\ref{1.9})-(\ref{1.11}) ensure that $f(x,u)$ is a lower order perturbation of the critical nonlinearity.
From Lemma 5 and Lemma 6 in \cite{SV}, for any $\va>0$ there exist constants $M(\va)>0,\delta(\va)>0$ such that for any $u\in \R,x\in \Omega$,
\begin{eqnarray}\label{1.12j}
\begin{split}
|f(x,u)|\leq \va |u|^{2^*-1}+M(\va);\,\,
|f(x,u)|\leq \delta(\va) |u|^{2^*-1}+\va |u|.
\end{split}
\end{eqnarray}
We denote by $H^s(\R^N)$ the usual fractional Sobolev space endowed with the norm 
\begin{equation}\label{1.13j}
\|u\|_{H^s(\R^N)}=\|u\|_{L^2(\R^N)}+(c_{N,s}\int_{\R^{2N}}\frac{|u(x)-u(y)|^2}{|x-y|^{N+2s}}dxdy)^{1/2}.
\end{equation}

Let $X_0$ be the functional space defined as
\begin{equation}
X_0=\{u\in H^s(\R^N), \,\, u=0 \text{ a.e. in } \R^N\backslash \Omega\}
\end{equation}
 with the norm 
$$ \dis\|u\|_{X_\mu}=\Bigl(c_{N,s}\int_{\R^{2N}}\frac{|u(x)-u(y)|^2}{|x-y|^{N+2s}}dxdy-\mu\int_{\Omega}\frac{|u(x)|^{2}}{|x|^{2s}}dx\Bigl)^{1/2},$$
which is equivalent to the norm $\|\cdot\|_{H^s(\R^N)}$ for $0<\mu<\Gamma_{N,s}$ (see in Lemma \ref{l2.2}).
Define 
\begin{equation}\label{1.15}\dis
\ds S=\inf_{u\in H^s(\R^N)\backslash \{0\}}  \frac{c_{N,s}\int_{\R^{2N}}\frac{|u(x)-u(y)|^2}{|x-y|^{N+2s}}dxdy-\mu\int_{\R^N}\frac{|u(x)|^{2}}{|x|^{2s}}dx}{(\int_{\R^N}|u(x)|^{2^*_s}dx)^{2/2^*_s}},
\end{equation}	
the Euler equation associated to (\ref{1.15}) is as follows
\begin{equation}
	\label{1.16}
	(-\Delta)^{s} u-\mu\frac{u}{|x|^{2s}}=|u|^{2^*_{s}-2}u  \text{ in } \R^N.
\end{equation}
In particular it has been showed  	in Theorem 1.2 of \cite{DMP} that for any solution $u(x) \in H^s(\R^N)$ of (\ref{1.16}), there exist two positive constants $c, C$ such that
\begin{equation}\label{1.17}
	\frac{c}{\Bigl(|x|^{1-\eta_\mu}(1+|x|^{2\eta_\mu})\Bigl )^{\frac{N-2s}{2}}}\leq u(x)\leq \frac{C}{\Bigl (|x|^{1-\eta_\mu}(1+|x|^{2\eta_\mu})\Bigl )^{\frac{N-2s}{2}}}, \text{ in } \R^N\backslash \{0\}
\end{equation}
where \begin{equation}\label{1.19}
\eta_\mu =1-\frac{2\alpha_\mu}{N-2s}, 
\end{equation}
and $\alpha_\mu\in (0,\frac{N-2s}{2})$ is a suitable parameter whose explicit value will be determined as the unique solution to the following equation
\begin{equation}\label{1.20jjj}
\varphi_{s,N}(\alpha_\mu)=2^{2s} \frac{\Gamma(\frac{\alpha_\mu+2s}{2})\Gamma(\frac{N-\alpha_\mu}{2})}{\Gamma(\frac{N-\alpha_\mu-2s}{2})\Gamma(\frac{\alpha_\mu}{2})}=\mu
\end{equation}
and $\varphi$ is strictly increasing. Obviously any achieve function  $U(x)$ of $S$ also satisfies (\ref{1.17}).

Define the constant 
\begin{equation}\label{1.20j}
S_{\lambda}=\inf_{u\in X_0\backslash\{0\}} \frac{c_{N,s}\int_{\R^{2N}}\frac{|u(x)-u(y)|^2}{|x-y|^{N+2s}}dxdy-\mu\int_{\Omega}\frac{|u(x)|^{2}}{|x|^{2s}}dx
	-\lambda\int_{\Omega}|u(x)|^2dx}{(\int_{\Omega}|u(x)|^{2^*_s}dx)^{\frac{2}{2^*_s}}}，
\end{equation}
Obviously, $S_\lambda \leq S$ because $\lambda>0$. Just as in \cite{BN} and \cite{J}, we will actually focus on the case when the strict inequality occurs. The first results of the present paper is the following one:
\begin{theorem}\label{a}
	Let $s\in (0,1), N>2s, \Omega$ be an open bounded set of $\R^N$,  $\lambda_{1,\mu}$ is the first eigenvalue of (\ref{1.10j}), $0<\mu<\Gamma_{N,s}$ ($\Gamma_{N,s}$ is defined in (\ref{1.8j})) and $0<\lambda<\lambda_{1,\mu}$. Let $f$ be a Carath\'eodory function verifying (\ref{1.9})-(\ref{1.11}), 
	assume that 
	there exists $u_0\in H^s(\R^N)\backslash \{0\}$ with $u_0\geq 0$ a.e. in $\Omega$ such that
	\begin{equation}\label{1.20jj}
	\sup_{t\geq 0}J_{\lambda}(tu_0)<\frac{s}{N}S^{\frac{N}{2s}},
	\end{equation}
	 problem (\ref{1.4}) admits a solution $u\in H^s(\R^N)$, which is not identically zero, such that $u=0$ a.e. in $\R^N\backslash\Omega$.
\end{theorem}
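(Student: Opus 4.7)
The plan is to apply the Mountain Pass Theorem without the Palais-Smale condition (Theorem 2.2 of \cite{BN}) to $J_\lambda$ on $X_0$, and then to exclude vanishing of the weak limit by using the strict energy bound (\ref{1.20jj}).

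First I would verify the mountain-pass geometry. The assumption $\lambda<\lambda_{1,\mu}$ together with the variational characterization of $\lambda_{1,\mu}$ gives
$$\|u\|_{X_\mu}^2-\lambda\|u\|_{L^2(\Omega)}^2\geq \Bigl(1-\tfrac{\lambda}{\lambda_{1,\mu}}\Bigr)\|u\|_{X_\mu}^2,$$
so the quadratic part of $J_\lambda$ is positive definite. Combining this with the sub-critical control (\ref{1.12j}) and the Sobolev embedding $X_0\hookrightarrow L^{2^*_s}(\Omega)$ produces $\rho,\alpha>0$ with $J_\lambda(u)\geq\alpha$ whenever $\|u\|_{X_\mu}=\rho$. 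For the second mountain, on the ray $tu_0$ the critical term $-\tfrac{1}{2^*_s}\|tu_0\|_{L^{2^*_s}}^{2^*_s}$ dominates as $t\to\infty$ (using (\ref{1.11}) so that $F(x,tu_0)=o(t^{2^*_s})$), hence $J_\lambda(tu_0)\to-\infty$ and a suitable $e=t_0u_0$ satisfies $J_\lambda(e)\leq 0$ and $\|e\|_{X_\mu}>\rho$. By hypothesis (\ref{1.20jj}) the mountain-pass level
$$c:=\inf_{\gamma\in\Gamma}\max_{t\in[0,1]} J_\lambda(\gamma(t))\leq \sup_{t\geq 0}J_\lambda(tu_0)<\tfrac{s}{N}S^{N/(2s)},$$
where $\Gamma$ denotes the family of continuous paths in $X_0$ joining $0$ to $e$.

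The MP theorem without PS then yields a sequence $(u_n)\subset X_0$ with $J_\lambda(u_n)\to c$ and $J'_\lambda(u_n)\to 0$ in $X_0^*$. I would obtain boundedness by combining $J_\lambda(u_n)-\tfrac{1}{2}\langle J'_\lambda(u_n),u_n\rangle$: the $X_\mu$-quadratic and $\lambda$-quadratic pieces cancel, leaving $\tfrac{s}{N}\|u_n\|_{L^{2^*_s}}^{2^*_s}$ plus a small error from $f$ controlled via (\ref{1.12j}); returning to $\langle J'_\lambda(u_n),u_n\rangle=o(\|u_n\|_{X_\mu})$ and absorbing the $L^2$-contribution by $\lambda<\lambda_{1,\mu}$ yields a uniform bound on $\|u_n\|_{X_\mu}$. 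Passing to a subsequence, $u_n\rightharpoonup u$ in $X_0$, $u_n\to u$ strongly in $L^q(\Omega)$ for every $q<2^*_s$, and a.e.\ in $\Omega$. By the growth (\ref{1.11}) and Vitali's theorem, one may pass to the limit in the weak formulation (\ref{1.5}), so $u$ is a weak solution of (\ref{1.4}) vanishing a.e.\ outside $\Omega$.

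The main obstacle is to show $u\not\equiv 0$, and this is precisely where (\ref{1.20jj}) is indispensable. Arguing by contradiction, assume $u\equiv 0$. Since $\|\cdot\|_{X_\mu}$ is a Hilbert norm on $X_0$ (for $0<\mu<\Gamma_{N,s}$) and the compact embedding $X_0\hookrightarrow L^2(\Omega)$ gives $u_n\to 0$ strongly in $L^2$, the terms $\lambda\|u_n\|_{L^2}^2$, $\int_\Omega F(x,u_n)\,dx$ and $\int_\Omega f(x,u_n)u_n\,dx$ are all $o(1)+\varepsilon\,\|u_n\|_{L^{2^*_s}}^{2^*_s}$ for arbitrary $\varepsilon>0$, by (\ref{1.12j}) and interpolation. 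Applying the Brezis-Lieb lemma to the critical norm one reaches
$$c=\tfrac{1}{2}\|u_n\|_{X_\mu}^2-\tfrac{1}{2^*_s}\|u_n\|_{L^{2^*_s}}^{2^*_s}+o(1),\qquad \|u_n\|_{X_\mu}^2-\|u_n\|_{L^{2^*_s}}^{2^*_s}=o(1),$$
so that $\|u_n\|_{X_\mu}^2$ and $\|u_n\|_{L^{2^*_s}}^{2^*_s}$ share a common limit $\ell\geq 0$ with $c=\tfrac{s}{N}\ell$. The Sobolev-Hardy inequality (\ref{1.15}) forces $\ell\geq S\,\ell^{2/2^*_s}$, so either $\ell=0$, giving $c=0$ and contradicting $c\geq\alpha>0$, or $\ell\geq S^{N/(2s)}$, giving $c\geq\tfrac{s}{N}S^{N/(2s)}$ and contradicting (\ref{1.20jj}). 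Hence $u\not\equiv 0$, completing the proof.
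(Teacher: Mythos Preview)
Your argument is correct and follows essentially the same route as the paper: Proposition~\ref{p3.1} supplies the mountain-pass geometry, Theorem~2.2 of \cite{BN} gives a Palais--Smale sequence at level $c\in[\beta,\tfrac{s}{N}S^{N/(2s)})$, boundedness is obtained from $J_\lambda(u_n)-\tfrac12\langle J'_\lambda(u_n),u_n\rangle$ together with (\ref{1.12j}) and $\lambda<\lambda_{1,\mu}$, and the nontriviality of the weak limit is forced by the dichotomy $\ell=0$ or $\ell\ge S^{N/(2s)}$ exactly as in the paper. The only cosmetic difference is that for passing to the limit in the critical term the paper invokes Lemma~A.2 of \cite{W} while you appeal to Vitali; in fact the cleanest justification here is weak $L^{2^*_s/(2^*_s-1)}$-convergence of $|u_n|^{2^*_s-2}u_n$ (bounded plus a.e.\ convergent), but this is a routine detail.
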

If assumption (\ref{1.20jj}) is satisfied, then it follows Theorem \ref{b}. 
\begin{theorem}\label{b}
	Let $s\in (0,1), N>2s, \Omega$ be an open bounded set of $\R^N$, $\lambda_{1,\mu}$ is the first eigenvalue of (\ref{1.10j}), and $\mu\leq \varphi^{-1}(\frac{N-4s}{2})$ ($\varphi$ is defined in (\ref{1.20jjj})). For any $0<\lambda<\lambda_{1,\mu}$ and  $f(x,u)\equiv0$,  problem (\ref{1.4}) admits a solution $u\in H^s(\R^N)$, which is not identically zero, such that $u=0$ a.e. in $\R^N\backslash\Omega$.
\end{theorem}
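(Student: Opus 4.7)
The strategy is to reduce Theorem \ref{b} to Theorem \ref{a} by exhibiting a nonnegative test function $u_0$ that satisfies the strict energy bound (\ref{1.20jj}). Since $f\equiv 0$, maximizing $J_\lambda(tu)$ in $t\geq 0$ for any $u\in X_0\setminus\{0\}$ with $\|u\|_{X_\mu}^2-\lambda\|u\|_{L^2}^2>0$ gives
\begin{equation*}
\sup_{t\geq 0}J_{\lambda}(tu)=\frac{s}{N}\left(\frac{\|u\|_{X_\mu}^2-\lambda\|u\|_{L^2(\Omega)}^2}{\|u\|_{L^{2^*_s}(\Omega)}^2}\right)^{N/(2s)},
\end{equation*}
so (\ref{1.20jj}) is equivalent to producing a test function whose $\lambda$-corrected Sobolev--Hardy quotient is strictly below $S$. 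Since every admissible $u$ satisfies $\|u\|_{X_\mu}^2/\|u\|_{L^{2^*_s}}^2\geq S$ by (\ref{1.15}), the task is to show that the gain $\lambda\|u\|_{L^2}^2/\|u\|_{L^{2^*_s}}^2$ dominates the quantitative defect from achieving $S$.

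For the test family I would take an extremal $U\in H^s(\R^N)$ of $S$ (whose existence and two-sided decay are recorded after (\ref{1.20jjj}) via (\ref{1.17})), rescale by $U_\varepsilon(x)=\varepsilon^{-(N-2s)/2}U(x/\varepsilon)$ so that $\|U_\varepsilon\|_{X_\mu}^2$ and $\|U_\varepsilon\|_{L^{2^*_s}(\R^N)}^{2^*_s}$ are $\varepsilon$-independent, and localize by $u_\varepsilon=\eta U_\varepsilon$, with $\eta\in C_c^\infty(\R^N)$ a radial cutoff equal to $1$ on a ball $B_{r/2}\subset\Omega$ (possible since $0\in\Omega$) and vanishing outside $B_r\subset\Omega$. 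Using the asymptotic decay $U(x)\lesssim|x|^{-(N-2s-\alpha_\mu)}$ at infinity extracted from (\ref{1.17}) and (\ref{1.19}), the change of variables $y=x/\varepsilon$ produces cutoff expansions of the form
\begin{equation*}
\|u_\varepsilon\|_{X_\mu}^2=\|U\|_{X_\mu}^2+O(\varepsilon^{N\eta_\mu}),\qquad \|u_\varepsilon\|_{L^{2^*_s}(\Omega)}^{2^*_s}=\|U\|_{L^{2^*_s}}^{2^*_s}+O(\varepsilon^{N\eta_\mu}).
\end{equation*}

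The linchpin is the lower bound on $\|u_\varepsilon\|_{L^2(\Omega)}^2$. By rescaling, $\|U_\varepsilon\|_{L^2(\R^N)}^2=\varepsilon^{2s}\|U\|_{L^2(\R^N)}^2$, and (\ref{1.17}) shows that $U\in L^2(\R^N)$ iff $2(N-2s-\alpha_\mu)>N$, i.e.\ $\alpha_\mu<(N-4s)/2$. Since $\varphi$ in (\ref{1.20jjj}) is strictly increasing, the hypothesis $\mu\leq\varphi^{-1}((N-4s)/2)$ is exactly $\alpha_\mu\leq(N-4s)/2$; in the strict case one obtains $\|u_\varepsilon\|_{L^2(\Omega)}^2\geq c\,\varepsilon^{2s}$ for some $c>0$, while at the borderline $\alpha_\mu=(N-4s)/2$ the same bound holds up to an extra $\log(1/\varepsilon)$ factor. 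The elementary inequality $(N-4s)/2<(N-2s)^2/(2N)$ translates into $N\eta_\mu>2s$, so the cutoff remainders are subleading compared with $\lambda\|u_\varepsilon\|_{L^2}^2$ as $\varepsilon\to 0^+$. Assembling the estimates gives
\begin{equation*}
\frac{\|u_\varepsilon\|_{X_\mu}^2-\lambda\|u_\varepsilon\|_{L^2}^2}{\|u_\varepsilon\|_{L^{2^*_s}}^2}\leq S-c\lambda\varepsilon^{2s}+o(\varepsilon^{2s})<S
\end{equation*}
for all sufficiently small $\varepsilon>0$, which is exactly (\ref{1.20jj}); Theorem \ref{a} applied with $u_0=u_\varepsilon$ then yields the desired nontrivial solution.

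The principal technical obstacle is the rigorous derivation of the cutoff expansion of the fractional Hardy seminorm $\|u_\varepsilon\|_{X_\mu}^2$. The nonlocality of $(-\Delta)^s$ couples the values of $u_\varepsilon$ across the boundary of $\operatorname{supp}\eta$ through the Gagliardo double integral, producing transition-annulus cross terms such as $\iint|U_\varepsilon(x)-U_\varepsilon(y)|^2|\eta(x)-\eta(y)|^2|x-y|^{-N-2s}\,dx\,dy$. Controlling these requires the sharp pointwise information in (\ref{1.17}) and is where the fractional setting departs most visibly from the classical Br\'ezis--Nirenberg analysis.
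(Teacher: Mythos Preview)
Your overall strategy is exactly the paper's: take $v_\varepsilon=\eta\,U_\varepsilon$ with $U$ an extremal of $S$, expand each piece of $J_\lambda(tv_\varepsilon)$ in $\varepsilon$, and verify (\ref{1.20jj}) so that Theorem~\ref{a} applies. The translation of the hypothesis on $\mu$ into $\alpha_\mu\le (N-4s)/2$, the $L^2$ lower bound with a logarithm at the borderline, and the identification of the Gagliardo cross terms as the main technical obstacle all match Propositions~\ref{l3.1} and~\ref{l3.2}.

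There is, however, a quantitative slip in your expansion of the Hardy--Gagliardo seminorm that affects the borderline case. The paper obtains (Proposition~\ref{l3.2}\,1))
\[
\|v_\varepsilon\|_{X_\mu}^2 \le S + O\!\bigl(\varepsilon^{(N-2s)\eta_\mu}\bigr),
\]
not $O(\varepsilon^{N\eta_\mu})$ as you state; the exponent $(N-2s)\eta_\mu$ is forced by the tail $U(x)\sim |x|^{-(N-2s)(1+\eta_\mu)/2}$ once it is squared and integrated against $|x|^{-2s}$ or against the Gagliardo kernel on the transition region. This matters exactly at $\alpha_\mu=(N-4s)/2$, which is equivalent to $(N-2s)\eta_\mu=2s$: there the cutoff remainder is genuinely $O(\varepsilon^{2s})$, of the \emph{same} order as the naive $L^2$ gain, and your justification ``$N\eta_\mu>2s$, so the cutoff remainders are subleading'' does not close the argument. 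What actually wins at the borderline, and what the paper uses, is that $\|v_\varepsilon\|_{L^2(\Omega)}^2\gtrsim \varepsilon^{2s}|\log\varepsilon|$ there, so the $\lambda$-term beats the $O(\varepsilon^{2s})$ remainder by the diverging factor $|\log\varepsilon|$. You already record the logarithm, so the ingredients are present; the chain of inequalities simply needs to be routed through $(N-2s)\eta_\mu\ge 2s$ (which is precisely the hypothesis of the theorem) rather than through the too-generous $N\eta_\mu>2s$.
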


   Recall the results in \cite{J} as the follows.
   
    Let $\bar \lambda_{1}$ denote the first eigenvalue of the operator $-\Delta -\frac{1}{|x|^2}$ in $\Omega$ with Dirichlet boundary conditions and  $\bar{\mu}=(\frac{N-2}{2})^2$. 
    
   a) If $\mu\leq \bar \mu-1$, problem (\ref{1.20}) has a positive solution for $0<\lambda <\bar\lambda_{1}$;
   
   b) if $\bar{\mu}-1<\mu<\bar\mu $, there exists a constant $\lambda^* \in (0, \bar \lambda_1)$ such that problem (\ref{1.20}) has a positive solution for $\lambda \in (\lambda^*,\bar\lambda_{1})$;
   
   c) If $\bar{\mu}-1<\mu<\bar\mu $ and $\Omega $ is a ball,  then problem (\ref{1.20}) has no solutions for $\lambda\leq \lambda^*$.
    	
    As for the results A), B),  C) for problem (\ref{1.1}), the space dimension $N$ plays a fundamental role when one seeks solutions of (\ref{1.1}). In particular in \cite{J} Jannelli point out the $N=3$ is a critical dimension for (\ref{1.1}), while for  (\ref{1.20}) it is only a matter of how $\mu$ close to $\bar \mu$. Theorem \ref{b} in our paper is an extension of the results of \cite{J} (part a). Here $\varphi^{-1}(\frac{N-4s}{2}) =\bar \mu-1$ when $s=0$.

    	This paper is organized as follows.  In Section 2, we give some preliminary lemmas. In Section 3 we prove the main results  Theorem \ref{a} and Theorem \ref{b} by variational methods. In fact, we first prove Theorem \ref{a} by Mountain-Pass Theorem. Then we reduce Theorem \ref{b} to Theorem \ref{a} by choosing a test function to verify the assumption of Theorem \ref{a}.


\section{Preliminary lemmas}
In this section we give some preliminary lemmas. 
	\begin{lemma}\label{l2.2}
		Assume  $0<\mu<\Gamma_{N,s}$. Then there exist two positive constants $C$ and $c$ such that for all $u\in X_0$
		\begin{equation}\label{2.2}
		C\|u\|^2_{H^s(\R^N)}\leq c_{N,s}\int_{\R^{2N}}\frac{|u(x)-u(y)|^2}{|x-y|^{N+2s}}dxdy-\mu\int_{\Omega}\frac{|u(x)|^{2}}{|x|^{2s}}dx\leq c \|u\|^2_{H^s(\R^N)}.
		\end{equation}
		that is
		\begin{equation}\label{2.3jj}
		\|u\|_{{X_\mu}}=\Big(c_{N,s}\int_{\R^{2N}}\frac{|u(x)-u(y)|^2}{|x-y|^{N+2s}}dxdy-\mu\int_{\Omega}\frac{|u(x)|^{2}}{|x|^{2s}}dx\Bigl)^{1/2}
		\end{equation}
	\end{lemma}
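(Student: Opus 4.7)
The lemma asserts two-sided control: the ``twisted'' seminorm $\|u\|_{X_\mu}^2$ is comparable to $\|u\|_{H^s(\R^N)}^2$ on $X_0$. The upper bound is essentially immediate, so the work lies in the lower bound, where the two key ingredients are the fractional Hardy inequality (\ref{1.4j}) (available because $X_0$ is the closure of $C_0^\infty(\Omega)$ under the Gagliardo seminorm, so (\ref{1.4j}) extends by density) and a fractional Poincar\'e inequality on the bounded set $\Omega$.

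For the upper bound, since $\mu>0$ the subtracted Hardy term only decreases the expression, so
\[
c_{N,s}\int_{\R^{2N}}\frac{|u(x)-u(y)|^2}{|x-y|^{N+2s}}\,dxdy-\mu\int_{\Omega}\frac{|u|^2}{|x|^{2s}}\,dx\;\le\; c_{N,s}\int_{\R^{2N}}\frac{|u(x)-u(y)|^2}{|x-y|^{N+2s}}\,dxdy,
\]
and the right-hand side is plainly bounded by a constant multiple of $\|u\|_{H^s(\R^N)}^2$ by the definition (\ref{1.13j}). This gives the right-hand inequality in (\ref{2.2}) with $c=1$ (after equivalence of $\|u\|_{L^2}+[u]_s$ and its $L^2$–square sum version).

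For the lower bound, I would proceed in two steps. First, by applying (\ref{1.4j}) to $u\in X_0$,
\[
\mu\int_{\Omega}\frac{|u|^2}{|x|^{2s}}\,dx\;\le\;\frac{\mu}{\Gamma_{N,s}}\,c_{N,s}\int_{\R^{2N}}\frac{|u(x)-u(y)|^2}{|x-y|^{N+2s}}\,dxdy,
\]
so subtracting yields
\[
\|u\|_{X_\mu}^2\;\ge\;\Bigl(1-\frac{\mu}{\Gamma_{N,s}}\Bigr)\,c_{N,s}\int_{\R^{2N}}\frac{|u(x)-u(y)|^2}{|x-y|^{N+2s}}\,dxdy,
\]
with the prefactor strictly positive by the hypothesis $0<\mu<\Gamma_{N,s}$. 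Second, I would invoke the fractional Poincar\'e inequality on $X_0$: since $\Omega$ is bounded and $u\equiv 0$ on $\R^N\setminus\Omega$, one has $\|u\|_{L^2(\R^N)}^2\le C_\Omega\, [u]_s^2$ (for instance via the fractional Sobolev embedding $X_0\hookrightarrow L^{2^*_s}(\Omega)$ followed by H\"older on the bounded set $\Omega$, or directly via the positivity of the first Dirichlet eigenvalue of $(-\Delta)^s$). Combining these two bounds, $[u]_s^2$ dominates $\|u\|_{H^s(\R^N)}^2$ up to a constant, and together with the first step this furnishes the left-hand inequality in (\ref{2.2}).

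The main obstacle is the sharp Hardy step: one must verify that (\ref{1.4j}), stated for $C_0^\infty(\R^N)$, actually applies to every $u\in X_0$. This is handled by a routine density argument using the definition of $X_0$ (which amounts to the closure of $C_0^\infty(\Omega)$ in the $H^s$ norm, so both sides of (\ref{1.4j}) pass to the limit), but it is worth flagging because the strict gap $\mu<\Gamma_{N,s}$ is precisely what makes the prefactor $(1-\mu/\Gamma_{N,s})$ positive and thus makes the lower bound in (\ref{2.2}) nondegenerate. Everything else reduces to standard manipulations with the two bounds above.
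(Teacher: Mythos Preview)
Your proposal is correct and follows essentially the same route as the paper: the upper bound drops the Hardy term, the lower bound uses the fractional Hardy inequality to get the factor $(1-\mu/\Gamma_{N,s})$ in front of the Gagliardo seminorm, and the $L^2$ part is recovered via the fractional Sobolev embedding combined with H\"older on the bounded domain $\Omega$. Your explicit mention of the density step extending (\ref{1.4j}) to all of $X_0$ is a point the paper leaves implicit.
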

	is a norm on $X_0$ equivalent to the usual one defined in (\ref{1.13j}).
	\begin{proof}
		On one hand, from (\ref{1.4j}), for all $0<\mu <\Gamma_{N,s}$,
		\begin{equation}\label{2.2j}
		\|u\|^2_{H^s(\R^N)}\geq c_{N,s}\int_{\R^{2N}}\frac{|u(x)-u(y)|^2}{|x-y|^{N+2s}}dxdy-\mu\int_{\Omega}\frac{|u(x)|^{2}}{|x|^{2s}}dx.\end{equation}
		On the other hand, 
		\begin{eqnarray}\label{2.3j}
		\begin{split}
		&c_{N,s}\int_{\R^{2N}}\frac{|u(x)-u(y)|^2}{|x-y|^{N+2s}}dxdy-\mu\int_{\Omega}\frac{|u(x)|^{2}}{|x|^{2s}}dx\\
		&\geq(1-\frac{\mu}{\Gamma_{N,s}})c_{N,s}\int_{\R^{2N}}\frac{|u(x)-u(y)|^2}{|x-y|^{N+2s}}dxdy
		\geq Cc_{N,s}\int_{\R^{2N}}\frac{|u(x)-u(y)|^2}{|x-y|^{N+2s}}dxdy
		\end{split}
		\end{eqnarray}
		and \begin{equation}\label{2.11j}
		\begin{split}
		\int_{\Omega}|u(x)|^2dx\leq |\Omega|^{(2^*_s-2)/2^*_s}\bigl(\int_{\Omega}|u|^{2^*_s}dx\bigl)^{\frac{2}{2^*_s}}&\leq c\int_{\R^{2N}}\frac{|u(x)-u(y)|^2}{|x-y|^{N+2s}}dxdy
		\end{split}
		\end{equation}
		then from (\ref{2.3j}) and (\ref{2.11j})
		\begin{equation}\label{2.6}
		\|u\|_{H^s(\R^N)}\leq c \bigl(c_{N,s}\int_{\R^{2N}}\frac{|u(x)-u(y)|^2}{|x-y|^{N+2s}}dxdy-\mu\int_{\Omega}\frac{|u(x)|^{2}}{|x|^{2s}}dx\bigl)^{1/2}.
		\end{equation}
		Collecting  with (\ref{2.2j}) and (\ref{2.6}), the proof is complete.
	\end{proof}		
%
%
\begin{lemma}\label{l2.1}
	Let $s\in (0,1), N>2s, \Omega$ be an open bounded set of $\R^N$ and $0<\mu<\Gamma_{N,s}$. Then 
	 problem (\ref{1.10j}) admits an eigenvalue $\lambda_{1,\mu}$ which is positive and that can be characterized as 
	\begin{equation}\label{2.1}
\dis	\lambda_{1,\mu}=\min_{u\in X_0\backslash \{0\}}\frac{c_{N,s}\int_{\R^{2N}}\frac{|u(x)-u(y)|^2}{|x-y|^{N+2s}}dxdy-  \mu\int_{\Om}\frac{|u(x)|^{2}}{|x|^{2s}}dx}{\int_{\Omega}|u(x)|^{2}dx}>0.
	\end{equation}
\end{lemma}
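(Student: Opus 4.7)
The plan is to characterize $\lambda_{1,\mu}$ as the minimum of the Rayleigh quotient
$$
R(u):=\frac{\|u\|_{X_\mu}^2}{\|u\|_{L^2(\Omega)}^2},\qquad u\in X_0\setminus\{0\},
$$
and to attain it by the direct method in the calculus of variations. First I would set $\lambda_{1,\mu}:=\inf_{u\in X_0\setminus\{0\}} R(u)$ and observe that Lemma \ref{l2.2}, combined with the continuous embedding $H^s(\R^N)\cap X_0\hookrightarrow L^2(\Omega)$ (valid because $\Omega$ is bounded), yields $\|u\|_{X_\mu}^2\geq C\|u\|_{H^s(\R^N)}^2\geq C'\|u\|_{L^2(\Omega)}^2$, so that $\lambda_{1,\mu}\geq C'>0$.

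Next I would pick a minimizing sequence $\{u_n\}\subset X_0$ normalized by $\|u_n\|_{L^2(\Omega)}=1$ with $\|u_n\|_{X_\mu}^2\to\lambda_{1,\mu}$. Equivalence of norms (Lemma \ref{l2.2}) guarantees that $\{u_n\}$ is bounded in $H^s(\R^N)$, hence up to a subsequence converges weakly in $H^s(\R^N)$ to some $u\in X_0$. The compact embedding of $X_0$ into $L^2(\Omega)$ then gives $u_n\to u$ strongly in $L^2(\Omega)$, so $\|u\|_{L^2(\Omega)}=1$ and in particular $u\not\equiv 0$.

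For the passage to the limit, I would use the fact that the quadratic form $\|\cdot\|_{X_\mu}^2$ arises from the bilinear form $\langle u,v\rangle_{X_\mu}$, which by Lemma \ref{l2.2} endows $X_0$ with a Hilbert space structure whose norm is equivalent to the $H^s$-norm. Since equivalent norms induce the same weak topology, the weak convergence $u_n\rightharpoonup u$ holds in $(X_0,\|\cdot\|_{X_\mu})$ as well, and weak lower semicontinuity of the squared Hilbert norm yields $\|u\|_{X_\mu}^2\leq\liminf_n\|u_n\|_{X_\mu}^2=\lambda_{1,\mu}$. Combined with $\|u\|_{L^2(\Omega)}=1$, this gives $R(u)\leq\lambda_{1,\mu}$, so $u$ realizes the infimum. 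A standard Lagrange multiplier computation, differentiating $R(u+t\varphi)$ at $t=0$ for arbitrary $\varphi\in X_0$, then produces
$$
c_{N,s}\int_{\R^{2N}}\frac{(u(x)-u(y))(\varphi(x)-\varphi(y))}{|x-y|^{N+2s}}\,dxdy-\mu\int_\Omega\frac{u\varphi}{|x|^{2s}}\,dx=\lambda_{1,\mu}\int_\Omega u\varphi\,dx,
$$
which is precisely the weak formulation of (\ref{1.10j}) with eigenvalue $\lambda_{1,\mu}$.

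The delicate point is the lower semicontinuity of $\|\cdot\|_{X_\mu}^2$ itself. The Hardy term $\int_\Omega|u|^2/|x|^{2s}\,dx$ taken in isolation is not continuous under weak $H^s$ convergence because of the singular weight at the origin, and writing $\|u\|_{X_\mu}^2$ as the difference of a weakly lower semicontinuous Gagliardo part and a Hardy part with a minus sign does not directly yield the desired inequality. The observation that rescues the argument is exactly Lemma \ref{l2.2}: once $\|\cdot\|_{X_\mu}$ is known to be an equivalent Hilbertian norm on $X_0$, lower semicontinuity of its square under weak convergence is automatic, and no separate analysis of the two pieces is required.
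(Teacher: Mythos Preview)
Your argument is correct, and the key step differs from the paper's. Both proofs normalize a minimizing sequence on the sphere $\{\|u\|_{L^2(\Omega)}=1\}$, extract a weak $H^s$ limit $u_0$, and use the compact embedding $X_0\hookrightarrow L^2(\Omega)$ to get $\|u_0\|_{L^2}=1$. At the crucial passage to the limit in the energy, however, the paper does \emph{not} invoke weak lower semicontinuity of $\|\cdot\|_{X_\mu}$ directly; instead it applies the Br\'ezis--Lieb lemma to the Gagliardo seminorm and to the Hardy integral separately, obtaining the splitting
\[
\|u_n\|_{X_\mu}^2=\|u_n-u_0\|_{X_\mu}^2+\|u_0\|_{X_\mu}^2+o_n(1),
\]
and then concludes from $\|u_n-u_0\|_{X_\mu}^2\ge 0$. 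Your route---observing that Lemma~\ref{l2.2} makes $\|\cdot\|_{X_\mu}$ an equivalent Hilbert norm on $X_0$, so that weak lower semicontinuity of the squared norm is automatic---is shorter and bypasses Br\'ezis--Lieb entirely; the paper's splitting gives finer information (it actually forces $\|u_n-u_0\|_{X_\mu}\to 0$), but that extra strength is not needed for the lemma. You also spell out the Lagrange-multiplier step showing that the minimizer solves the weak eigenvalue equation, which the paper leaves implicit.
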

\begin{proof}It is remain to show that $\lambda_{1,\mu}$ can be attained and $\lambda_{1,\mu}>0$.
		Denote $$M=\{u\in X_0, \|u\|_{L^2(\Omega)}=1\},$$
		$$ I(u)=\frac{1}{2}\bigl (c_{N,s}\int_{\R^{2N}}\frac{|u(x)-u(y)|^2}{|x-y|^{N+2s}}dxdy-\mu\int_{\Om}\frac{|u(x)|^{2}}{|x|^{2s}}dx\bigl)=\frac{1}{2}\|u\|^2_{{X_\mu}}.$$
	Obviously (\ref{2.1}) is equivalent to 
		\begin{equation}\label{2.1j}
		\dis	\lambda_{1,\mu}=\min_{u\in M}(c_{N,s}\int_{\R^{2N}}\frac{|u(x)-u(y)|^2}{|x-y|^{N+2s}}dxdy-  \mu\int_{\Om}\frac{|u(x)|^{2}}{|x|^{2s}}dx)>0.
		\end{equation}

Let us take a minizing sequence $u_n\subset M$ such that
\begin{equation}
I(u_n)\rightarrow \inf_{u\in M}I(u)\geq 0 \text{ as }n\rightarrow \infty.
\end{equation}
From  Lemma \ref{l2.2}, $\|u_n\|_{H^s(\R^N)}$ is bounded. Then there exists $u_0$ such that (up to a subsequence, still denoted by $u_n$)
\begin{eqnarray}
u_n \rightharpoonup u_0 \text{ in } H^s(\R^N),
u_n \rightarrow u_0 \text{ in } L_{\text{loc}}^2(\R^N), \text{ and } u_n \rightarrow u_0 \text{ a.e. in }\R^N \text{ as } n\rightarrow \infty.
\end{eqnarray}
So $\|u_0\|_{L^2(\Omega)}=1\text{ and } u_0=0 \text{ a.e. in } \R^N\backslash \Omega$, that is $u_0 \in M$. Denote $v_n=u_n-u_0$, we have
\begin{eqnarray}
v_n \rightharpoonup 0 \text{ in } H^s(\R^N),
v_n \rightarrow 0 \text{ in } L_{\text{loc}}^2(\R^N), \text{ and } v_n \rightarrow 0 \text{ a.e. in }\R^N \text{ as } n\rightarrow \infty,
\end{eqnarray}
then by Br\'ezis-Lieb Lemma in \cite{BL}, it follows
\begin{equation}
\int_{\R^{2N}}\frac{|u_n(x)-u_n(y)|^2}{|x-y|^{N+2s}}dxdy=\int_{\R^{2N}}\frac{|v_n(x)-v_n(y)|^2}{|x-y|^{N+2s}}dxdy+\int_{\R^{2N}}\frac{|u_0(x)-u_0(y)|^2}{|x-y|^{N+2s}}dxdy+o_n(1),
\end{equation}
\begin{equation}
\int_{\Om}\frac{|u_n(x)|^{2}}{|x|^{2s}}dx=\int_{\Om}\frac{|v_n(x)|^{2}}{|x|^{2s}}dx+\int_{\Om}\frac{|u_0(x)|^{2}}{|x|^{2s}}dx+o_n(1),
\end{equation}
which implies that
\begin{equation}
\begin{split}
I(u_n)&=\frac{1}{2}\bigl (c_{N,s}\int_{\R^{2N}}\frac{|u_n(x)-u_n(y)|^2}{|x-y|^{N+2s}}dxdy-\mu\int_{\Om}\frac{|u_n(x)|^{2}}{|x|^{2s}}dx\bigl)\\&
=\frac{1}{2}\|v_n\|^2_{{X_\mu}}+\frac{1}{2}\|u_0\|^2_{X_\mu}+o_n(1)\\
&\geq \inf_{u\in M}I(u)(1+\|v_n\|^2_{L^2(\Omega)})
\end{split}
\end{equation}
where the last inequality follows from (\ref{2.1}) and (\ref{2.1j}).
Then
\begin{equation}
\inf_{u\in M}I(u)
=\lim_{n\rightarrow \infty}\frac{1}{2}\|v_n\|_{{X_\mu}}+\frac{1}{2}\|u_0\|^2_{X_\mu}
\geq \inf_{u\in M}I(u).
\end{equation}
Thus 
\begin{equation}
	 \inf_{u\in M}I(u)=I(u_0),
\end{equation}
that is, $u_0$ is a minimizer of $I(u).$ Since $\|u_0\|_{L^2(\Omega)}=1$, it is easy to obtain that $\lambda_{1,\mu}=I(u_0)>0$. The proof is complete.
\end{proof}

\section{The proof of main results}
In this section, we study the critical points of $J_\lambda$ which are solutions of problem (\ref{1.4}). In order to find these critical points, we will apply a variant of Mountain-Pass Theorem without the Palais-Smale condition (refer \cite{BN}). Due to the lack of compactness in the embedding $X_0\hookrightarrow L^{2^*_s}(\Omega)$, the functional $J_\lambda$ does not verify the Palais-Smale condition globally, but only in the energy range determined by the best fractional critical Sobolev constant $S$ given in (\ref{1.15}). 

First of all, we prove the functional $J_\lambda$ has the Mountain-Pass geometric structure.

\begin{proposition}\label{p3.1}
	Let $\lambda\in (0,\lambda_{1,\mu})$ and $f$ be a Carath\'edory function satisfying conditions (\ref{1.9})-(\ref{1.11}). Then there exists
	 $\rho>0$, $\beta>0$, $e\in X_0$  such that 
	 
	 1) for any $u\in X_0$ with $\|u\|_{X_\mu}=\rho$, $J_\lambda (u)\geq \beta$;
	 
	 2) $e\geq 0$ a.e. in $\R^N, \|e\|_{X_\mu}>\rho$ and $J_\lambda(e)<\beta$.
\end{proposition}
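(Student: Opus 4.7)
The plan is to verify each Mountain-Pass condition by a direct estimate: for 1) bound $J_\lambda$ from below on small spheres in $X_\mu$, and for 2) show $J_\lambda(tv)\to-\infty$ along a nonnegative ray.

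For 1), I would start from the second bound in (\ref{1.12j}), which after integration in $u$ yields $|F(x,u)|\leq \frac{\delta(\va)}{2^*_s}|u|^{2^*_s}+\frac{\va}{2}|u|^2$. Combining this with the fractional Sobolev-type inequality $\|u\|_{L^{2^*_s}(\Omega)}^{2^*_s}\leq S^{-2^*_s/2}\|u\|_{X_\mu}^{2^*_s}$, which follows directly from the variational characterization of $S$ in (\ref{1.15}), together with the Poincaré-type inequality $\int_\Omega |u|^2\,dx\leq \lambda_{1,\mu}^{-1}\|u\|_{X_\mu}^2$ supplied by Lemma \ref{l2.1}, I obtain
\begin{equation*}
J_\lambda(u)\geq \frac{1}{2}\Bigl(1-\frac{\lambda+\va}{\lambda_{1,\mu}}\Bigl)\|u\|_{X_\mu}^2-C\|u\|_{X_\mu}^{2^*_s}.
\end{equation*}
Since $\lambda<\lambda_{1,\mu}$, I would fix $\va$ small so that the quadratic coefficient $\kappa:=\frac{1}{2}(1-\frac{\lambda+\va}{\lambda_{1,\mu}})$ is strictly positive. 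Because $2^*_s>2$, the function $r\mapsto \kappa r^2-Cr^{2^*_s}$ is positive and bounded away from zero near some small $r=\rho$, hence for every $u\in X_0$ with $\|u\|_{X_\mu}=\rho$ one has $J_\lambda(u)\geq \beta>0$, proving 1).

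For 2), I would fix any nonzero nonnegative $v\in X_0$ (say, a nonnegative bump function compactly supported in $\Omega$) and study $t\mapsto J_\lambda(tv)$ for $t>0$. Integrating the first inequality in (\ref{1.12j}) with $\va=1/2$ gives $|F(x,u)|\leq \frac{1}{2\cdot 2^*_s}|u|^{2^*_s}+M|u|$; discarding the nonpositive $-\frac{\lambda}{2}t^2\int_\Omega v^2$ term then leads to
\begin{equation*}
J_\lambda(tv)\leq \frac{t^2}{2}\|v\|_{X_\mu}^2-\frac{t^{2^*_s}}{2\cdot 2^*_s}\int_\Omega v^{2^*_s}\,dx+Mt\int_\Omega v\,dx.
\end{equation*}
Since $\int_\Omega v^{2^*_s}\,dx>0$ and $2^*_s>2$, the right-hand side tends to $-\infty$ as $t\to\infty$. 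Hence there exists $t_0>0$ large enough that $e:=t_0 v$ satisfies $e\geq 0$ a.e. in $\R^N$, $\|e\|_{X_\mu}>\rho$ and $J_\lambda(e)<\beta$ (indeed $<0$), which proves 2).

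I do not anticipate any serious obstacle. The argument is the standard adaptation of the Brézis–Nirenberg Mountain-Pass verification to the fractional setting. Conditions (\ref{1.10})–(\ref{1.11}), encoded in the two bounds of (\ref{1.12j}), are precisely what guarantee that $F$ acts as a lower-order perturbation both at $u=0$ (local geometry for 1)) and at $|u|\to\infty$ (so that the critical term dominates for 2)). The only mild care required is to use the correct inequality from (\ref{1.12j}) in each step — the one with small $|u|^2$ coefficient near the origin for 1), and the one with small $|u|^{2^*_s}$ coefficient at infinity for 2).
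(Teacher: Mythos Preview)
Your proof is correct and follows essentially the same route as the paper's: both verify the Mountain-Pass geometry by combining the eigenvalue bound from Lemma~\ref{l2.1}, the Sobolev constant $S$ from (\ref{1.15}), and the growth estimates (\ref{1.12j}) on $f$. The only cosmetic differences are that the paper absorbs the $\va\int_\Omega |u|^2$ term via H\"older plus Sobolev rather than directly via (\ref{2.1}), and for part 2) the paper states the lower bound $F(x,tu_0)\geq -\frac{t^{2^*_s}}{2\cdot 2^*_s}|u_0|^{2^*_s}$ for large $t$ instead of your integrated form with the extra linear term; neither variation affects the argument.
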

\begin{proof}
	For all $u\in X_0$, by (\ref{2.1}) and (\ref{1.12j}) we get for $\va>0$
	\begin{align*}
		J_{\lambda}&\geq \frac{1}{2}\Bigl (c_{N,s}\int_{\R^{2N}}\frac{|u(x)-u(y)|^2}{|x-y|^{N+2s}}dxdy-\lambda\int_{\Omega}|u(x)|^2dx-\mu\int_{\Om}\frac{|u(x)|^2}{|x|^{2s}}dx\Bigl )\\&\,\,\hspace{3mm}-\frac{1}{2^*_s}\int_{\Omega}|u(x)|^{2^*_s}dx-\va\int_{\Omega}|u|^2dx-\delta(\va)\int_{\Omega}|u(x)|^{2^*_s}dx\\
		&\geq \frac{1}{2}(1-\frac{\lambda}{\lambda_{1,\mu}}) \Bigl (c_{N,s}\int_{\R^{2N}}\frac{|u(x)-u(y)|^2}{|x-y|^{N+2s}}dxdy-\mu\int_{\Om}\frac{|u(x)|^2}{|x|^{2s}}dx\Bigl )\\&\,\,\hspace{3mm}-\va|\Omega|^{\frac{2^*_s-2}{2^*_s}}
		(\int_{\Omega}|u|^{2^*_s}dx)^{2^*_s/2}-(\delta(\va)+\frac{1}{2^*_s})\int_{\Omega}|u(x)|^{2^*_s}dx\\
	&	\geq \bigl(\frac{1}{2}(1-\frac{\lambda}{\lambda_{1,\mu}})-\va|\Omega|^{\frac{2^*_s-2}{2^*_s}}S\bigl) |u\|_{{X_\mu}}^{2}
-(\delta(\va)+\frac{1}{2^*_s})S^{2^*_s/2}\|u\|_{{X_\mu}}^{2^*_s/2}.
	\end{align*}
	Choose $\va>0$ small enough, there exist $\alpha>0$ and $\nu>0$ such that
	\begin{align}\label{3.1}
	J_{\lambda}(u)\geq \alpha \|u\|_{X_\mu}^2-\nu \|u\|_{X_\mu}^{2^*_s}.
	\end{align}
Now let $u\in X_0, \|u\|_{X_\mu}=\rho>0$, since $2^*_s>2$, choose $\rho$ small enough, then there exists $\beta>0$ such that
\begin{equation}
\inf_{u\in X_0,\|u\|_{{X_\mu}}=\rho}J_{\lambda}\geq \beta>0.
\end{equation}	
Fix any $u_0\in X_0$ with $\|u_0\|_{L^{2^*_s}(\Omega)}>0$. Without loss of generality,  we can assume $u_0\geq 0$ a.e. in $\R^N$ (otherwise, we can replace any $u_0\in X_0$ with its positive part, which belong to $X_0$ too, thanks to Lemma 5.2 in \cite{NPV}).
Since for $t>0$ large enough
\begin{equation}
F(x,tu_0)\geq-\frac{t^{2^{*}_s}}{22^*_s}|u_0|^{{2^*_s}},
\end{equation}
it follows that
\begin{equation}
J_{\lambda}(u_0)\leq ct^2 \|u_0\|^2_{X_\mu}-\frac{t^{2^*_s}}{22^*_s}\|u_0\|_{L^{2^*_s}}^{{2^*_s}}\rightarrow -\infty \text{ as } t\rightarrow \infty.
\end{equation}
Take 
\begin{equation}
e=t_0u_0
\end{equation}
 and $t_0$ sufficiently large, we have $J(t_0u_0)<0<\beta$.
The proof is complete.
\end{proof}
As a consequence of Proposition \ref{p3.1},  $J_{\lambda}(u)$ satisfies the geometry structure of Mountain-Pass Theorem.
Set 
\begin{equation}\label{3.5}
 c^*=:\inf_{\gamma \in \bar\Gamma} \sup_{t
	\in [0,1]}J_{\lambda}(\gamma (t)),
\end{equation}
 where $ \bar\Gamma=\{\gamma \in
C([0,1],X_0):\gamma(0)=0,\gamma(1)=e\in X_0\}$.

{\bf Proof of Theorem \ref{a} }
for all $\gamma\in \bar\Gamma $,  the function $t\rightarrowtail \|\gamma(t)\|_{X_\mu}$ is continuous, then there exists $\bar t\in (0,t)$ such that 
$\|\gamma (\bar t)\|_{X_\mu}=\rho$. It follows that
\begin{equation}
\sup_{t\in [0,1]} J_{\lambda}(\gamma(t))\geq  J_{\lambda}(\gamma (\bar t))\geq \inf_{u\in X_0, \|u\|_{X_\mu}=\rho}J_\lambda(u)\geq \beta,
\end{equation}
then $c^*\geq \beta$.
Since $te\in \bar\Gamma$, we have that
\begin{equation}\label{3.8}
\begin{split}
0<	\beta\leq c^*&=\inf_{\gamma \in \bar\Gamma} \sup_{t
 \in [0,1]}J_{\lambda}(\gamma (t))\\& \leq 
\sup_{t\in [0,1]}J_\lambda (te)\\&
\leq  \sup_{\zeta
 \geq 0}J_{\lambda}
(\zeta u_0)<\frac{s}{N}S^{\frac{N}{2s}}.
\end{split}
\end{equation}
By  Theorem 2.2 in \cite{BN} and Proposition \ref{p3.1}, there exists a Palais-Smale sequence $\{u_n\}\subset X_0$ a.e.,
\begin{equation}
J_{\lambda}(u_n)\rightarrow c^*, \,\,\,J'_{\lambda}(u_n)\rightarrow 0.
\end{equation}
From (\ref{1.12j}) and the definition of $J_\lambda$, it follows
\begin{eqnarray}\label{3.10}
\begin{split}
c^*+1&\geq  J_\lambda(u_n)-\frac{1}{2}<J'_\lambda(u_n),u_n>\\
      &=(\frac{1}{2}-\frac{1}{2^*_s})\|u_n\|^{2^*_s}_{L^{2^*_s}(\Omega)}+\int_{\Omega}
      \bigl (\frac{1}{2}f(x,u_n)u_n-F(x,u_n)\bigl )dx
      \\&\geq (\frac{1}{2}-\frac{1}{2^*_s})\|u_n\|^{2^*_s}_{L^{2^*_s}(\Omega)}-\va\int_{\Omega}|u_n|^{2^*_s} dx-c\int_{\Omega}|u_n|^{} dx\\
      &\geq c\|u_n\|^{2^*_s}_{L^{2^*_s}(\Omega)}-2\va \|u_n\|^{2^*_s}_{L^{2^*_s}(\Omega)}-C
      \end{split}
\end{eqnarray}
which implies $\|u_n\|^{2^*_s}_{L^{2^*_s}(\Omega)}$ is bounded for $\va>0$ small enough. So
\begin{eqnarray}\label{3.10}
\begin{split}
c^*+1&\geq
J_\lambda(u_n)\geq\frac{1}{2}(1-\frac{\lambda}{\lambda_{1,\mu}})\|u_n\|_{{X_\mu}}^2-\frac{1}{2^*_s}\|u_n\|^{2^*_s}_{L^{2^*_s}(\Omega)}-\int_{\Omega}F(x,u_n)dx
\\&\geq \frac{1}{2}(1-\frac{\lambda}{\lambda_{1,\mu}})\|u_n\|_{{X_\mu}}^2-\frac{1}{2^*_s}\|u_n\|^{2^*_s}_{L^{2^*_s}(\Omega)}-\va\|u_n\|^2_{L^2(\Om)}-C\|u_n\|^{2^*_s}_{L^{2^*_s}(\Omega)}
\\&\geq \frac{1}{2}(1-\frac{\lambda+2\va}{\lambda_{1,\mu}})\|u_n\|_{{X_\mu}}^2-c.
\end{split}
\end{eqnarray}
Thus $\|u_n\|_{{X_\mu}}$ is bounded for $\va>0$ small enough.

 Next we claim that there exists a solution $u_0\in X_0$ of (\ref{1.4}). Since $u_n$ is bounded in $X_0$, by Lemma {\ref{l2.2}}, up to a subsequence, still denoted by $u_n$, as $n\rightarrow \infty$
\begin{eqnarray}
u_n \rightharpoonup u_0 \text{ in } H^s(\R^N),\label {3.12jj}\\
u_n \rightarrow u_0 \text{ in } L_{\text{loc}}^p(\Omega),\text{ for all } p\in [1,2^*_s),\label {3.13j}\\
u_n\rightarrow u_0 \text{  a.e. in }\R^N.\label {3.14j}
\end{eqnarray}
Thus $u_0=0\text{ in }\R^N\backslash \Omega$, $u_0\in X_0$ and for all $\phi\in C^\infty_0(\Omega)$, as $n\rightarrow \infty$
\begin{equation}
\begin{split}
c_{N,s}\int_{\R^{2N}}\frac{(u_n(x)-u_n(y))(\phi(x)-\phi(y))}{|x-y|^{N+2s}}dxdy&\rightarrow c_{N,s}\int_{\R^{2N}}\frac{(u_0(x)-u_0(y))(\phi(x)-\phi(y))}{|x-y|^{N+2s}}dxdy,
\end{split}
\end{equation}
\begin{equation}
\begin{split}
\int_{\Omega}\frac{u_n(x)\phi(x)}{|x|^{2s}}dx\rightarrow \int_{\Omega}\frac{u_0(x)\phi(x)}{|x|^{2s}}dx, 
\end{split}
\end{equation}
\begin{equation}
\begin{split}
\int_{\Omega}u_n(x)\phi(x)dx\rightarrow \int_{\Omega}u_0(x)\phi(x)dx.
\end{split}
\end{equation}
Denote $g(u)=|u|^{2^*_s-2}u$.
From (\ref{3.12jj})-(\ref{3.14j}), (\ref{1.12j}) and $f(x,u),g(u)$ is continuous in $u$, applying Lemma A.2 in \cite{W}, 
\begin{equation}
\begin{split}
\|f(x,u_n)-f(x,u_0)\|_{L^1(\Omega)}\leq c\|u_n-u\|_{L^q(\Omega)}\rightarrow 0,\\
\|g(u_n)-g(u_0)\|_{L^1(\Omega)}\leq c\|u_n-u\|_{L^q(\Omega)}\rightarrow 0,\\
\end{split}
\end{equation}
where $q=2^*_s-1$.
Then it implies $\text{ for all } \phi\in C^\infty_0(\Omega)$, as $n\rightarrow \infty$
\begin{equation}
\begin{split}
|\int_{\Omega}(|u_n|^{2^*_s-2}u_n -|u_0|^{2^*_s-2}u_0)\phi dx|\leq c\|g(u_n)-g(u_0)\|_{L^1(\Omega)}&\rightarrow 0,
\end{split}
\end{equation}
\begin{equation}
\begin{split}
\int_{\Omega}(f(x,u_n) -f(x,u_0))\phi dx\leq c\|f(x,u_n)-f(x,u_0)\|_{L^1(\Omega)}&\rightarrow 0.\\
\end{split}
\end{equation}
Thus $u_0\in X_0$ is a solution of (\ref{1.4}). To complete the proof of Theorem 1.1, it suffices to show that  $u_0\not\equiv0$.

Suppose, in the contrary, $u_0\equiv 0$.
From (\ref{3.12jj})-(\ref{3.14j}) and (\ref{1.12j})
\begin{equation}
\begin{split}
\limsup_{n\rightarrow \infty}|\int_{\Omega}f(x,u_n)u_ndx|&\leq \va \limsup_{n\rightarrow \infty}\int_{\Om}|u_n|^{2^*_s}dx+c\limsup_{n\rightarrow \infty}\int_{\Om}|u_n|dx\leq c\va 
\end{split}
\end{equation}
where $\va>0$ is an arbitrary constant. Letting $\va\rightarrow 0$, it follows that
\begin{equation}\label{3.21j}
\begin{split}
\limsup_{n\rightarrow \infty}	|\int_{\Omega}f(x,u_n)u_ndx|\rightarrow 0.
\end{split}
\end{equation}
Similar as （(\ref{3.21j}), it follows
\begin{equation}\label{3.22j}
\begin{split}
\limsup_{n\rightarrow \infty}	|\int_{\Omega}F(x,u_n)dx|\rightarrow 0.
\end{split}
\end{equation}
Since 
\begin{equation}
\begin{aligned}\label{1.5j}
<J_{\lambda}'(u_n),u_n>&=c_{N,s}\int_{\R^{2N}}\frac{|u_n(x)-u_n(y)|^2}{|x-y|^{N+2s}}dxdy-\mu\int_{\Omega}\frac{|u_n(x)|^2}{|x|^{2s}}dx-\lambda\int_{\Omega}|u_n(x)|^2dx
\\&\ \ \ \ -\int_{\Omega}|u_n(x)|^{2^*_s}dx-\int_{\Omega}f(x,u_n)u_ndx,
\end{aligned}
\end{equation}
thanks to (\ref{3.21j}) and (\ref{3.22j}), we have
\begin{equation}
\begin{aligned}\label{1.5j}
c_{N,s}\int_{\R^{2N}}\frac{|u_n(x)-u_n(y)|^2}{|x-y|^{N+2s}}dxdy-\mu\int_{\Omega}\frac{|u_n(x)|^2}{|x|^{2s}}dx-\int_{\Omega}|u_n(x)|^{2^*_s}dx\rightarrow 0.
\end{aligned}
\end{equation}
Denote \[c_{N,s}\int_{\R^{2N}}\frac{|u_n(x)-u_n(y)|^2}{|x-y|^{N+2s}}dxdy-\mu\int_{\Omega}\frac{|u_n(x)|^2}{|x|^{2s}}dx\rightarrow L\]
which implies  
\[\int_{\Omega}|u_n(x)|^{2^*_s}dx\rightarrow L,\]
and 
\begin{equation}
0<\beta\leq c^*=\lim_{n\rightarrow\infty }J_\lambda{(u_n)}=(\frac{1}{2}-\frac{1}{2^*_s})L.
\end{equation}
It is easy to see that $L>0$. From (\ref{1.15})， we have
\begin{equation}
L\geq S L^ {\frac{2}{2^*_s}}，
\end{equation}
then
\begin{equation}
c^*\geq \frac{s}{N}S^{\frac{N}{2s}}
\end{equation}
which contradicts (\ref{3.8}). Hence $u_0\not\equiv 0$ in $\Omega$.
The proof of Theorem \ref{a} is complete.

For the proof of Theorem \ref{b}, the key step is to check the condition (\ref{1.20jj}). Firstly we choose the test function $v_\va(x)$ of (\ref{1.20jj}) as the following.
Let \begin{equation}
u(x)=\frac{U(x)}{\|U(x\|_{L^{2^*_s}(\R^N)}}, u_\va(x)=\va^{-\frac{N-2s}{2}}u(\frac{x}{\va}).
\end{equation}
Thus $\|u_\va\|_{L^{2^*_s}(\R^N)}=1$, and $u_\va(x)$ is the achieve function of $S$ (defined in (\ref{1.15})).
 Let us fix $\delta>0$ such that $B_{4\delta}\subset \Omega$, and let $\eta \in C^\infty_0(B_{2\delta})$ be such that $0\leq \eta\leq 1$ in $\R^N$, $\eta\equiv 1 \text{ in } B_{\delta}$.
For every $\va>0$, define
\begin{equation}
v_\va(x)=u_\va(x)\eta(x),x\in \R^N.
\end{equation}
Obviously $v_\va\in X_0$.   Secondly we make some estimates for $v_\va$.  The main strategy of the estimations for $v_\va(x)$ is similar to \cite{SV}.
\begin{proposition}\label{l3.1}
		1) 	For all $x\in B_r^c$, it follows
	\begin{eqnarray}
	|v_\va|\leq |u_\va|\leq c_r\va^{\frac{(N-2s)\eta_\mu}{2}},\label{3.13}\\
	|\nabla v_\va|\leq c_r\va^{\frac{(N-2s)\eta_\mu}{2}}\label{3.14}
	\end{eqnarray}
		where $r>0$ and $c_r$ is a positive constant depending on $r$.
	\item 2) For all $y\in B_\delta^C, x\in \R^N$ with $|x-y|< \delta/2$, it follows
	\begin{equation}\label{3.15j}
	|v_\va(x)-v_\va(y)|\leq c|x-y|\va^{\frac{(N-2s)\eta_\mu}{2}};
	\end{equation}
	for	all $x,y\in B_\delta^C$, it follows
	\begin{equation}\label{3.16j}
	|v_\va(x)-v_\va(y)|\leq c\min\{|x-y|,1\}\va^{\frac{(N-2s)\eta_\mu}{2}}.
	\end{equation}
	\end{proposition}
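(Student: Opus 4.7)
The plan is to exploit the pointwise two-sided bound (\ref{1.17}) on the extremal $U$ (hence on $u=U/\|U\|_{L^{2^*_s}(\R^N)}$) together with the scaling $u_\va(x)=\va^{-(N-2s)/2}u(x/\va)$ and the product rule $\nabla v_\va = \eta\,\nabla u_\va + u_\va\,\nabla\eta$. First I would prove (\ref{3.13}): rescaling the upper bound in (\ref{1.17}) gives
\[
u_\va(x)\leq \frac{C\,\va^{-(N-2s)/2}}{\bigl((|x|/\va)^{1-\eta_\mu}(1+(|x|/\va)^{2\eta_\mu})\bigr)^{(N-2s)/2}}.
\]
For small $\va$ and $|x|\ge r$, dropping the $1$ in the inner factor bounds the denominator below by $(|x|/\va)^{(N-2s)(1+\eta_\mu)/2}$; cancelling powers of $\va$ yields $u_\va(x)\leq C\,\va^{(N-2s)\eta_\mu/2}\,|x|^{-(N-2s)(1+\eta_\mu)/2}$, which on $B_r^c$ is dominated by $c_r\,\va^{(N-2s)\eta_\mu/2}$. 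Since $0\leq\eta\leq 1$ and $u\ge 0$, $|v_\va|\leq |u_\va|$ is immediate.

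Next I would establish (\ref{3.14}). Away from the origin $u$ is smooth by interior regularity applied to (\ref{1.16}) (the Hardy weight $|x|^{-2s}$ is harmless for $|z|$ bounded away from $0$), and a companion gradient decay
\[
|\nabla u(z)|\leq C\,|z|^{-(N-2s)(1+\eta_\mu)/2 - 1}\qquad(|z|\text{ large})
\]
can be derived by combining (\ref{1.17}) with interior Schauder/H\"older estimates for the fractional Laplacian on balls of radius comparable to $|z|$. Scaling then yields $|\nabla u_\va(x)|\leq C\,\va^{(N-2s)\eta_\mu/2}\,|x|^{-(N-2s)(1+\eta_\mu)/2 - 1}$ on $B_r^c$. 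Since $\eta\in C_0^\infty(B_{2\delta})$ with $|\nabla\eta|\leq C$, combining this with the pointwise bound already established in (\ref{3.13}), via the product rule for $\nabla v_\va$, delivers (\ref{3.14}).

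Finally, (\ref{3.15j}) and (\ref{3.16j}) follow from (\ref{3.14}) by the mean value theorem along the segment $[y,x]$. When $y\in B_\delta^c$ and $|x-y|<\delta/2$, every point $z$ of that segment satisfies $|z|\geq |y|-|x-y|>\delta/2$, so (\ref{3.14}) with $r=\delta/2$ applies and gives $|v_\va(x)-v_\va(y)|\leq c\,|x-y|\,\va^{(N-2s)\eta_\mu/2}$, which is (\ref{3.15j}). For (\ref{3.16j}) with $x,y\in B_\delta^c$, I would split into two cases: if $|x-y|\leq \delta/2$, the same mean value argument applies; if $|x-y|>\delta/2$, the triangle inequality and (\ref{3.13}) yield $|v_\va(x)-v_\va(y)|\leq 2c_\delta\,\va^{(N-2s)\eta_\mu/2}$, which I would rewrite as $\leq c\min\{|x-y|,1\}\,\va^{(N-2s)\eta_\mu/2}$ by absorbing the $\delta$-dependence into $c$.

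The main obstacle is the companion gradient decay $|\nabla u(z)|\leq C|z|^{-(N-2s)(1+\eta_\mu)/2-1}$, which is not stated explicitly in the excerpt; verifying it cleanly requires combining (\ref{1.17}) with interior regularity for the fractional operator on balls of radius proportional to $|z|$, in the spirit of the techniques developed in \cite{SV} and \cite{DMP}. All remaining steps are direct computations or elementary applications of the mean value theorem.
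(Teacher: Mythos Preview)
Your argument is essentially the same as the paper's: both obtain (\ref{3.13}) by rescaling the pointwise bound (\ref{1.17}), both derive (\ref{3.15j}) and (\ref{3.16j}) from (\ref{3.14}) via the mean value theorem along the segment $[x,y]$ (checking that $|\xi|>\delta/2$) together with the $|x-y|\geq\delta/2$ case handled by the triangle inequality and (\ref{3.13}). The only difference concerns (\ref{3.14}): the paper simply differentiates the profile on the right of (\ref{1.17}) and asserts this bounds $|\nabla u_\va|$, whereas you propose to justify the companion gradient decay $|\nabla u(z)|\leq C|z|^{-(N-2s)(1+\eta_\mu)/2-1}$ via interior regularity for the fractional operator on balls of radius $\sim|z|$; your route is the more rigorous of the two, and the obstacle you flag is real but is exactly the step the paper takes for granted.
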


	\begin{proof}
		1)From the definition of $v_\va(x)$ and $u_{\va}(x)$, for $x\in B_r^c$, we have
		\begin{equation}\label{3.15}
		\begin{split}
		|v_\va(x)|\leq |u_\va(x)|&\leq 	\frac{c\va^{-(N-2s)/2}}{\Bigl(|x/\va|^{1-\eta_\mu}(1+|x/\va|^{2\eta_\mu})\Bigl )^{\frac{N-2s}{2}}}\\&=\frac{c\va^{\eta_\mu(N-2s)/2}}{\Bigl(|x|^{1-\eta_\mu}(\va^{2\eta_\mu}+|x|^{2\eta_\mu})\Bigl )^{\frac{N-2s}{2}}}\\&
		\leq c\va^{\eta_\mu(N-2s)/2} r^{-(1+\eta_\mu)(N-2s)/2}	
		\leq c_r\va^{\eta_\mu(N-2s)/2}
		\end{split}
		\end{equation}
		where $c_r$ is a positive constant depending on $r$.
		Since 
		\begin{equation}\label{3.16}
			\begin{split}
			|\nabla u_\va|&\leq c\va^{-(N-2s)/2}\bigl (|\frac{x}{\va}|^{1-\eta_\mu}(1+|\frac{x}{\va}|^{2\eta_\mu})\big)^{-\frac{N-2s}{2}-1}
			(|\frac{x}{\va}|^{\eta_\mu}+|\frac{x}{\va}|^{-\eta_\mu})\frac{1}{\va}\\
			&= c\va^{-(N-2s)/2}\bigl (|\frac{x}{\va}|^{1-\eta_\mu}(1+|\frac{x}{\va}|^{2\eta_\mu})\big)^{-\frac{N-2s}{2}}\frac{
			(|\frac{x}{\va}|^{\eta_\mu}+|\frac{x}{\va}|^{-\eta_\mu})}{|\frac{x}{\va}|^{1-\eta_\mu}(1+|\frac{x}{\va}|^{2\eta_\mu})}\frac{1}{\va}\\&
		\leq c |u_\va(x)|\frac{
			(|\frac{x}{\va}|^{\eta_\mu}+|\frac{x}{\va}|^{-\eta_\mu})}{|\frac{x}{\va}|^{1-\eta_\mu}(1+|\frac{x}{\va}|^{2\eta_\mu})}\frac{1}{\va}\leq  c |u_\va(x)|(\frac{1}{|x|}+\frac{\va^{2\eta_\mu}}{|x|^{2\eta_\mu+1}})\leq c_r |u_\va(x)|
			\end{split}
		\end{equation}
		and 
		\begin{equation}\label{3.17}
		\begin{split}
		|\nabla v_\va(x)|=|\nabla \eta(x)u_\va(x)+\nabla u_\va(x)\eta(x)|\leq c(|\nabla u_\va(x)|+|u_\va(x)|),
		\end{split}
		\end{equation}
		collecting (\ref{3.15})-(\ref{3.17}), we obtain (\ref{3.13}) and (\ref{3.14}).
		
		2)  Similar to Claim 10 in \cite{SV}, it follows (\ref{3.15j})-(\ref{3.16j}).
		
		 In fact,
		for all $x\in \R^N, y\in B_\delta^C, |x-y|<\frac{\delta}{2}$,
	      \begin{equation}
	      \begin{split}
	     |v_\va(x)-v_\va(y)|=|\nabla v_\va(\xi)||x-y|
	      \end{split}
	      \end{equation}
		where $\xi\in \R^N$  with $\xi=tx+(1-t)y$ for some $ t\in (0,1)$. Since
		\begin{equation}
		|\xi|=|y+t(x-y)|\geq |y|-t|x-y|> \frac{\delta}{2},
		\end{equation}
	taking $r=\frac{\delta}{2}$, from (\ref{3.14}) it follows (\ref{3.15j}).
	
	For all $x\in B_\delta^C , y\in B_\delta^C,$
	 if $ |x-y|\geq \frac{\delta}{2}$,
	\[|v_{\va}(x)-v_\va(y)|\leq |v_{\va}(x)|+|v_\va(y)|\leq  c \min\{1,|x-y|\} \va^{\eta_\mu(N-2s)/2}; \]
	if $ |x-y|\leq \frac{\delta}{2}$, (\ref{3.16j}) follows from (\ref{3.15j}).
	\end{proof}
\begin{proposition}\label{l3.2}
	Let $s\in (0,1)$, and $N>2s$, then the following estimates holds
	\begin{align}
	&1)\ \ \ c_{N,s}\int_{\R^{2N}}\frac{|v_\va(x)-v_\va(y)|^2}{|x-y|^{N+2s}}dxdy-\mu\int_{\Omega}\frac{|v_\va(x)|^2}{|x|^{2s}}dx\leq S+O(\va^{(N-2s)\eta_\mu});\label{3.23}\\
&	2)\,\, \|v_\va(x)\|^{2^*_s}_{L^{2^*_s}(\Omega)}= 1+O(\va^{\eta_\mu N});\label{3.24}
\\
&	3) \int_{\Omega}{|v_\va(x)|^2}dx\geq \begin{cases}
	c \va^{\eta_\mu(N-2s)}-c\va^{2s},& \text { if } \eta_\mu <\frac{2s}{N-2s},\\
	c|\log\va|\va^{2s}, &\text { if } \eta_\mu =\frac{2s}{N-2s},\\ -c\va^{\eta_\mu(N-2s)}+c\va^{2s},& \text { if } \eta_\mu >\frac{2s}{N-2s}.
	 	\end{cases}\label{3.25}
	\end{align}

\end{proposition}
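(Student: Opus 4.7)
The key observation I would use throughout is that $u_\va$, being a rescaled $L^{2^*_s}$--normalized extremal of $S$, inherits the identity
\[c_{N,s}\int_{\R^{2N}}\frac{|u_\va(x)-u_\va(y)|^2}{|x-y|^{N+2s}}dxdy - \mu\int_{\R^N}\frac{|u_\va|^2}{|x|^{2s}}dx = S,\]
together with the asymptotic decay from (\ref{1.17}), namely $u(y)\asymp |y|^{-(1+\eta_\mu)(N-2s)/2}$ as $|y|\to\infty$. For estimate 2), since $\eta\equiv 1$ on $B_\delta$ and $\|u_\va\|_{L^{2^*_s}(\R^N)}=1$,
\[\|v_\va\|^{2^*_s}_{L^{2^*_s}(\Omega)} = \int_{\R^N}u_\va^{2^*_s}\eta^{2^*_s}dx = 1-\int_{B_\delta^C}u_\va^{2^*_s}(1-\eta^{2^*_s})dx,\]
and the rescaling $y=x/\va$ reduces the correction to $\int_{B_{\delta/\va}^C}u^{2^*_s}\,dy$; using $u^{2^*_s}(y)\lesssim |y|^{-N(1+\eta_\mu)}$ this is of order $(\delta/\va)^{-N\eta_\mu}=O(\va^{N\eta_\mu})$.

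For estimate 1), I would subtract the extremal identity from the left-hand side of (\ref{3.23}) to obtain
\[c_{N,s}\bigl([v_\va]_{H^s}^2-[u_\va]_{H^s}^2\bigr)+\mu\int_{B_\delta^C}\frac{|u_\va|^2(1-\eta^2)}{|x|^{2s}}dx,\]
where $[\cdot]_{H^s}$ is the Gagliardo seminorm. The Hardy remainder is dominated by $\int_{B_\delta^C}|u_\va|^2/|x|^{2s}dx$, which under $y=x/\va$ becomes $\int_{B_{\delta/\va}^C}u^2/|y|^{2s}dy$ and equals $O(\va^{(N-2s)\eta_\mu})$ by the same decay. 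For the Gagliardo difference I set $w=u_\va(1-\eta)$, which is supported in $B_\delta^C$ and satisfies the pointwise and Lipschitz-type bounds $|w|\leq c\va^{(N-2s)\eta_\mu/2}$, $|\nabla w|\leq c\va^{(N-2s)\eta_\mu/2}$ (the analogue of Proposition \ref{l3.1} for $w$, obtained from the product rule applied to $u_\va$ and $1-\eta$). Expanding
\[[v_\va]_{H^s}^2-[u_\va]_{H^s}^2 = [w]_{H^s}^2 - 2\int_{\R^{2N}}\frac{(u_\va(x)-u_\va(y))(w(x)-w(y))}{|x-y|^{N+2s}}dxdy,\]
I split $\R^{2N}$ into $B_\delta\times B_\delta$ (zero contribution, since $w$ vanishes on $B_\delta$), the two symmetric mixed strips $B_\delta\times B_\delta^C$, and $B_\delta^C\times B_\delta^C$. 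On the mixed strips I use the Lipschitz bound from (\ref{3.15j}) for nearby pairs and the pointwise bound (\ref{3.13}) for distant ones, while on $B_\delta^C\times B_\delta^C$ I apply (\ref{3.16j}); each piece contributes $O(\va^{(N-2s)\eta_\mu})$, giving estimate 1).

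For the lower bound 3), since $\eta\equiv 1$ on $B_\delta$,
\[\int_\Omega|v_\va|^2 dx \geq \int_{B_\delta}u_\va^2 dx = \va^{2s}\int_{B_{\delta/\va}}u^2(y)\,dy,\]
and the lower bound in (\ref{1.17}) gives $u^2(y)\gtrsim |y|^{-(1+\eta_\mu)(N-2s)}$ for $|y|\geq 1$. Hence
\[\int_{B_{\delta/\va}\setminus B_1}u^2(y)\,dy \gtrsim \int_1^{\delta/\va} r^{2s-1-\eta_\mu(N-2s)}dr,\]
which evaluates to $c(\delta/\va)^{2s-\eta_\mu(N-2s)}-c$ when $\eta_\mu<\frac{2s}{N-2s}$, to $c\log(\delta/\va)$ at equality, and to a bounded quantity when $\eta_\mu>\frac{2s}{N-2s}$. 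Multiplying by $\va^{2s}$ and absorbing the contribution from $\int_{B_1}u^2$ (finite since $(1-\eta_\mu)(N-2s)=2\alpha_\mu<N$) as the subdominant correction yields the three cases in (\ref{3.25}). The main obstacle is estimate 1): the cross term in the Gagliardo expansion couples the long-range behavior of the extremal $u_\va$ with the cutoff, so one must execute the four-region decomposition carefully and match the pointwise and Lipschitz bounds from Proposition \ref{l3.1} to produce the sharp exponent $\va^{(N-2s)\eta_\mu}$ rather than a weaker power.
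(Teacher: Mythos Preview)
Your treatment of parts 2) and 3) matches the paper's: the same rescaling $y=x/\va$ and the two-sided bound (\ref{1.17}) yield the stated exponents. For part 1) the paper proceeds differently. It decomposes the Gagliardo seminorm of $v_\va$ itself over the four regions $B_\delta\times B_\delta$, $D=\{x\in B_\delta,\,y\in B_\delta^C,\,|x-y|\geq\delta/2\}$, $E=\{x\in B_\delta,\,y\in B_\delta^C,\,|x-y|<\delta/2\}$, and $B_\delta^C\times B_\delta^C$; on $B_\delta\times B_\delta$ one has $v_\va=u_\va$, on $E$ and $B_\delta^C\times B_\delta^C$ the bounds from Proposition~\ref{l3.1} apply directly, and on $D$ the error $|u_\va(x)-v_\va(y)|^2-|u_\va(x)-u_\va(y)|^2$ is controlled via an explicit $L^1$ estimate on $u_\va$ in $B_\delta$ (the calculation (3.33)).

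Your route through $w=u_\va(1-\eta)$ is a legitimate alternative, but there is a real gap in your sketch. The function $w$ is \emph{not} compactly supported: it equals $u_\va$ on $B_{2\delta}^C$. Hence on $B_\delta^C\times B_\delta^C$ the analogue of (\ref{3.16j}) for $w$, namely $|w(x)-w(y)|\leq c\min\{|x-y|,1\}\va^{(N-2s)\eta_\mu/2}$, leads after substitution to
\[
\va^{(N-2s)\eta_\mu}\int_{B_\delta^C\times B_\delta^C}\frac{\min\{|x-y|^2,1\}}{|x-y|^{N+2s}}\,dx\,dy=+\infty,
\]
because the inner integral in $y$ is a finite constant but the outer integral in $x$ runs over an unbounded set. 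The paper's argument on this region works precisely because $v_\va$ has support in $B_{2\delta}$, so the effective $x$-domain is bounded. To repair your approach you must invoke the genuine spatial decay $|u_\va(x)|\lesssim \va^{(N-2s)\eta_\mu/2}|x|^{-(1+\eta_\mu)(N-2s)/2}$ and the corresponding gradient decay at infinity, and then carry out a tail-energy computation; Proposition~\ref{l3.1} alone does not supply this. A second, smaller gap: for the cross term on the ``distant'' part of the mixed strip ($x\in B_\delta$, $y\in B_\delta^C$, $|x-y|\geq\delta/2$), the pointwise bound (\ref{3.13}) does not cover $u_\va(x)$ when $x$ is near the origin; one needs instead $\int_{B_\delta}|u_\va|\,dx=O(\va^{(N-2s)\eta_\mu/2})$, which is exactly the content of the paper's estimate (3.33).
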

\begin{proof}
1)	Define \begin{equation}
	D=\{(x,y)\in {\R^N}\times\R^N|x\in B_\delta, y\in B_\delta^C, |x-y|\geq \frac{\delta}{2}\}
	\end{equation}
	\begin{equation}
	E=\{(x,y)\in \R^N\times\R^N|x\in B_\delta, y\in B_\delta^C, |x-y|< \frac{\delta}{2}\}.
	\end{equation}
	Then 
	\begin{equation}\label{3.25j}
	\begin{split}
&c_{N,s}\int_{\R^{2N}}\frac{|v_\va(x)-v_\va(y)|^2}{|x-y|^{N+2s}}dxdy
\\&=c_{N,s}\int_{B_\delta\times B_\delta}\frac{|u_\va(x)-u_\va(y)|^2}{|x-y|^{N+2s}}dxdy+c_{N,s}\int_{D}\frac{|u_\va(x)-v_\va(y)|^2}{|x-y|^{N+2s}}dxdy\\
&\hspace{5mm}+c_{N,s}\int_{E}\frac{|u_\va(x)-v_\va(y)|^2}{|x-y|^{N+2s}}dxdy+c_{N,s}\int_{B_\delta^C\times B_\delta^C}\frac{|v_\va(x)-v_\va(y)|^2}{|x-y|^{N+2s}}dxdy.
	\end{split}
	\end{equation}

From (\ref{3.15j}) and $E\subset B_\delta\times  B_{2\delta}$ , we have
\begin{equation}\label{3.45}
\begin{split}
c_{N,s}\int_{E}\frac{|u_\va(x)-v_\va(y)|^2}{|x-y|^{N+2s}}dxdy\leq
c_{N,s}\int_{B_\delta\times  B_{2\delta}}\frac{\va^{(N-2s)\eta_\mu}}{|x-y|^{N+2s-2}}dxdy\leq c_\delta \va^{(N-2s)\eta_\mu},
\end{split}
\end{equation}
where $c_\delta>0$ is a constant depending on $\delta$.
Similarly, from (\ref{3.16j}) 
\begin{equation}\label{3.46}
\begin{split}
	c_{N,s}\int_{B_\delta^C\times B_\delta^C}\frac{|u_\va(x)-v_\va(y)|^2}{|x-y|^{N+2s}}dxdy&\leq
	c_{N,s}\int_{B_\delta^C\times B_\delta^C}\frac{\va^{(N-2s)\eta_\mu}\min\{1,|x-y|^2\}}{|x-y|^{N+2s}}dxdy\\
	&\leq c_\delta\va^{(N-2s)\eta_\mu}.
\end{split}
\end{equation}
where $c_\delta>0$ is a constant depending on $\delta$.
Now, it remains to estimate the integral on $D$ of (\ref{3.25j}),
\begin{equation}\label{3.30}
\begin{split}
&c_{N,s}\int_{D}\frac{|u_\va(x)-v_\va(y)|^2}{|x-y|^{N+2s}}dxdy\\&
\leq c_{N,s}\int_{D}\frac{|u_\va(x)-u_\va(y)|^2}{|x-y|^{N+2s}}dxdy+c_{N,s}\int_{D}\frac{|u_\va(y)-v_\va(y)|^2}{|x-y|^{N+2s}}dxdy\\
&\hspace{5mm}+c_{N,s}\int_{D}\frac{2|v_\va(y)-u_\va(y)||u_\va(x)-u_\va(y)|}{|x-y|^{N+2s}}dxdy
\end{split}
\end{equation}
Now we estimate the last term in the right hand side of (\ref{3.30}).
Once more by (\ref{3.13}), \begin{equation}\label{3.31}
\begin{split}
&c_{N,s}\int_{D}\frac{|u_\va(y)-v_\va(y)|^2}{|x-y|^{N+2s}}dxdy\\&\leq
c_{N,s}\int_{D}\frac{|u_\va(y)|^2+|v_\va(y)|^2}{|x-y|^{N+2s}}dxdy\\
&\leq c\int_{|x-y|\geq \delta/2}\frac{\va^{(N-2s)\eta_\mu}}{|x-y|^{N+2s}}dxdy\leq c_{\delta}\va^{(N-2s)\eta_\mu}.
\end{split}
\end{equation}
where $c_\delta>0$ is a constant depending on $\delta$.

And  for $(x,y)\in D$,
\begin{equation}
\begin{split}
|u_\va(x)v_\va(y)|\leq |u_\va(x)u_\va(y)|&\leq c\va^{(N-2s)\eta_\mu/2}\frac{\va^{-(N-2s)/2}}{\Bigl(|x/\va|^{1-\eta_\mu}(1+|x/\va|^{2\eta_\mu})\Bigl )^{\frac{N-2s}{2}}}\\
&\leq \frac{c}{\Bigl(|x|^{1-\eta_\mu}(1+|x/\va|^{2\eta_\mu})\Bigl )^{\frac{N-2s}{2}}},
\end{split}
\end{equation} 
let $\bar x=\frac{x}{\va},z=y-x$, it follows
 \begin{equation}\label{3.33}
\begin{split}
&c_{N,s}\int_{D}\frac{|u_\va(x)u_\va(y)|}{|x-y|^{N+2s}}dxdy\\&\leq 
c\int_{D}\frac{1}{\Bigl(|x|^{1-\eta_\mu}(1+|x/\va|^{2\eta_\mu})\Bigl )^{\frac{N-2s}{2}}|x-y|^{N+2s}}dxdy \\&\leq 
c\va^N\int_{|\bar x|\leq \delta/\va, |z|>\frac{\delta}{2}}\frac{1}{\Bigl(|\va \bar  x|^{1-\eta_\mu}(1+|\bar x|^{2\eta_\mu})\Bigl )^{\frac{N-2s}{2}}|z|^{N+2s}}d\bar xdz\\
&\leq c\va^{N-\frac{(N-2s)(1-\eta_\mu)}{2}}\int_{|\bar x|\leq \delta/\va}\frac{1}{\Bigl(| \bar  x|^{1-\eta_\mu}(1+|\bar x|^{2\eta_\mu})\Bigl )^{\frac{N-2s}{2}}}d\bar x\\
&\leq  c\va^{N-\frac{(N-2s)(1-\eta_\mu)}{2}}\Bigl (\int_{|\bar x|\leq 1 }\frac{1}{\Bigl(| \bar  x|^{1-\eta_\mu}(1+|\bar x|^{2\eta_\mu})\Bigl )^{\frac{N-2s}{2}}}d\bar x+\int_{1\leq |\bar x|\leq\delta/\va}\frac{1}{\Bigl(| \bar  x|^{1-\eta_\mu}(1+|\bar x|^{2\eta_\mu})\Bigl )^{\frac{N-2s}{2}}}d\bar x\Bigl )\\
&\leq c\va^{N-\frac{(N-2s)(1-\eta_\mu)}{2}}\Bigl (\int_{r\leq 1 }\frac{r^{(N-1)-(1-\eta_\mu)\frac{N-2s}{2}}}{\Bigl(1+r^{2\eta_\mu}\Bigl )^{\frac{N-2s}{2}}}dr+\int_{1\leq r\leq\delta/\va}r^{(N-1)-\frac{(1+\eta_\mu)(N-2s)}{2}}dr\Bigl )\\
 &\leq c\va^{N-\frac{(N-2s)(1-\eta_\mu)}{2}}\Bigl (\int_{r\leq 1 }r^{(N-1)-(1-\eta_\mu)\frac{N-2s}{2}}dr+\va^{-N+\frac{(1+\eta_\mu)(N-2s)}{2}}+c\Bigl )\\
 &=O(\va^{(N-2s)\eta_\mu}).
\end{split}
\end{equation}	
 It follows from (\ref{3.30})-(\ref{3.33}) that
 \begin{equation}\label{3.30j}
 \begin{split}
 c_{N,s}\int_{D}\frac{|u_\va(x)-v_\va(y)|^2}{|x-y|^{N+2s}}dxdy
 \leq c_{N,s}\int_{D}\frac{|u_\va(x)-u_\va(y)|^2}{|x-y|^{N+2s}}dxdy+O(\va^{(N-2s)\eta_\mu}).
 \end{split}
 \end{equation}

Thus from (\ref{3.45}), (\ref{3.46}) and (\ref{3.30j}),
	\begin{equation}\label{3.34}
	\begin{split}
	c_{N,s}\int_{\R^{2N}}\frac{|v_\va(x)-v_\va(y)|^2}{|x-y|^{N+2s}}dxdy&\leq   c_{N,s}\int_{\R^{2N}}\frac{|u_\va(x)-u_\va(y)|^2}{|x-y|^{N+2s}}dxdy+O(\va^{(N-2s)\eta_\mu}).
	\end{split}
	\end{equation}
Since 	
	\begin{equation}
	\begin{split}
	|\mu\int_{\R^N}\frac{(1-\eta(x)^2)|u_\va(x)|^2}{|x|^{2s}}dx|&\geq c|\int_{|x|\geq 2\delta}\frac{|u_\va(x)|^2}{|x|^{2s}}dx|
	\\&= c|\int_{|x|\geq 2\delta}\frac{\va^{-(N-2s)}}{\Bigl(|x/\va|^{1-\eta_\mu}(1+|x/\va|^{2\eta_\mu})\Bigl )^{N-2s}|x|^{2s}} dx	\\
	&= c|\int_{|x|\geq 2\delta/\va}\frac{1}{\Bigl(|x|^{1-\eta_\mu}(1+|x|^{2\eta_\mu})\Bigl )^{N-2s}|x|^{2s}} dx\\&\geq
	 c|\int_{r\geq 2\delta/\va}r^{N-1-2s-(1+\eta_\mu){(N-2s)}} dr= O(\va^{(N-2s)\eta_\mu}),
	\end{split}
	\end{equation}
	and
	\begin{equation}
	\begin{split}
	|\mu\int_{\R^N}\frac{(1-\eta(x)^2)|u_\va(x)|^2}{|x|^{2s}}dx|&\leq c|\int_{|x|\geq \delta}\frac{|u_\va(x)|^2}{|x|^{2s}}dx|
	\\&\leq c|\int_{|x|\geq \delta}\frac{\va^{-(N-2s)}}{\Bigl(|x/\va|^{1-\eta_\mu}(1+|x/\va|^{2\eta_\mu})\Bigl )^{N-2s}|x|^{2s}} dx= O(\va^{(N-2s)\eta_\mu}),
	\end{split}
	\end{equation}
	it gives
	\begin{equation}\label{3.36}
	\begin{split}
	\mu\int_{\R^N}\frac{|v_\va(x)|^2}{|x|^{2s}}dx&
	=\mu\int_{\R^N}\frac{|u_\va(x)|^2}{|x|^{2s}}dx-\mu\int_{\R^N}\frac{(1-\eta(x)^2)|u_\va(x)|^2}{|x|^{2s}}dx
	\\&= \mu\int_{\R^N}\frac{|u_\va(x)|^2}{|x|^{2s}}dx+O(\va^{(N-2s)\eta_\mu}).
	\end{split}
	\end{equation}
		From (\ref{3.34}) and (\ref{3.36}), we have
		\begin{equation}\label{3.37}
		\begin{split}
		c_{N,s}\int_{\R^{2N}}\frac{|v_\va(x)-v_\va(y)|^2}{|x-y|^{N+2s}}dxdy-\mu\int_{\R^N}\frac{|v_\va(x)|^2}{|x|^{2s}}dx\leq  S+O(\va^{(N-2s)\eta_\mu}).
		\end{split}
		\end{equation}
		
		2) Similar to (\ref{3.36}), we have
		\begin{equation}
		\int_{\R^N}|v_\va(x)|^{2^*_s}dx= \int_{\R^N}|u_\va(x)|^{2^*_s}dx+O(\va^{N\eta_\mu})
		\end{equation}
	
		3)Finally, we want to prove (\ref{3.25})\begin{equation}
	\begin{split}
	\int_{\Omega}{|v_\va(x)|^2}dx&\geq 	\int_{|x|\leq \delta}{|u_\va(x)|^2}dx \\&\geq c\int_{|x|\leq \delta}\frac{\va^{-(N-2s)}}{\Bigl(|x/\va|^{1-\eta_\mu}(1+|x/\va|^{2\eta_\mu})\Bigl )^{N-2s}} dx	\\
	&= \va^{2s}c\int_{|x|\leq \delta/\va}\frac{1}{\Bigl(|x|^{1-\eta_\mu}(1+|x|^{2\eta_\mu})\Bigl )^{N-2s}} dx\\
	&\geq c\va^{2s}\int_1 ^{\delta/\va}\frac{r^{N-1}}{\Bigl(r^{1-\eta_\mu}(1+r^{2\eta_\mu})\Bigl )^{N-2s}} dr\\
	&=\begin{cases}
	c \va^{\eta_\mu(N-2s)}-c\va^{2s},& \text { if } \eta_\mu<\frac{2s}{N-2s},\\
	c|\log\va|\va^{2s}, &\text { if } \eta_\mu=\frac{2s}{N-2s},\\ -c\va^{\eta_\mu(N-2s)}+c\va^{2s},& \text { if } \eta_\mu>\frac{2s}{N-2s}.
	\end{cases}
	\end{split}
	\end{equation}	
\end{proof}
{\bf Proof of Theorem 1.2} From Theorem \ref{a}, we only need to verify (\ref{1.20jj}).
 Denote
 $t_\va$ be the attaining point of $\dis\max_{t>0}J_{\lambda}(tv_\va)$.
 Similar as the proof of  Lemma 8.1 in \cite{GY},  let
$t_\va$ be the attaining point of $\dis\max_{t>0}J_\lambda \left(tv_\va\right)$, we claim $t_\va$ is uniformly bounded for $\va>0$ small.  In fact,
we consider the function
\begin{equation}
\begin{split}
g(t)=J_\lambda \left(tv_\va\right)&=\frac{t^2}{2}(\|v_\va(x)\|^2_{X_\mu}-\int_{\Om}\la |v_\va|^2 dx)-\frac{t^{{2^*_s}}}{{2^*_s}}\int_\Om
|v_\va|^{{2^*_s}}dx
\\& \geq
\frac{t^2}{2}(1-\frac{\lambda}{\lambda_{1,\mu}})\|v_\va(x)\|^2_{X_\mu}-\frac{t^{{2^*_s}}}{{2^*_s}}\int_\Om
|v_\va|^{{2^*_s}}dx.
\end{split}
\end{equation} 
Since $\displaystyle\lim_{t\rightarrow +\infty} g(t)=-\infty$ and $g(t)>0$ when $t$ closed to $0$, so that $\displaystyle\max_{t>0}g(t)$ is attained for $t_\va>0$. Then
\begin{equation}\label{3.11j}
g'(t_\va)=t_\va(\|v_\va(x)\|^2_{X_\mu}-\lambda\int_{\Om} |v_\va|^2 dx)-{t_\va^{{2^*_s-1}}}\int_\Om
|v_\va|^{{2^*_s}}dx=0.\end{equation} 
 From  (\ref{3.11j}) and Lemma \ref{l3.2}, for $\va$ sufficiently small, 
we have \begin{equation}\label{3.12j}\frac{S}{2}(1-\frac{\lambda}{\lambda_\mu})\leq t_\va^{{2^*_s-2}}=\frac{\|v_\va(x)\|^2_{X_\mu}-\lambda\int_{\Om} |v_\va|^2 dx}{\int_\Om
|v_\va|^{{2^*_s}}dx}<2S.\end{equation}
which implies $t_\va$ is bounded for $\va>0$ small enough.


 From the definition of $\eta_\mu$, (\ref{1.19}) and (\ref{1.20jjj}), we have
  \begin{equation}
 \mu\leq \varphi^{-1}(\frac{N-4s}{4}) \Longleftrightarrow \eta_\mu \geq \frac{2s}{N-2s}.
  \end{equation}
 Hence  for
 $\va>0$ sufficient small and $\mu\leq \varphi^{-1}(\frac{N-4s}{4})$,
 \begin{align*}
 \max_{t>0}J_{\lambda}(tv_\va)&=J_{\lambda} (t_\va v_\va)\\&\leq
 \max_{t>0}\Bigl\{\frac{t^2}{2}\Bigl (c_{N,s}\int_{\R^{2N}}\frac{|v_\va(x)-v_\va(y)|^2}{|x-y|^{N+2s}}dxdy-\mu\int_{\Om}\frac{|v_\va(x)|^2}{|x|^{2s}}dx\Bigl )
 -\frac{t^{{2^*_{s}}}}{{2^*_{s}}}\int_{\Om}
 |v_\va(x)|^{{2^*_{s}}}dx\Bigl\}
 \\&\ \ \   
 -\begin{cases}
 c \va^{\eta_\mu(N-2s)}-c\va^{2s},& \text { if } \eta_\mu<\frac{2s}{N-2s},\\
 c|\log\va|\va^{2s}, &\text { if } \eta_\mu=\frac{2s}{N-2s},\\ -c\va^{\eta_\mu(N-2s)}+c\va^{2s},& \text { if } \eta_\mu>\frac{2s}{N-2s},
 \end{cases}\\
 &<\frac{s}{N}(S+\va^{\eta_\mu(N-2s)})^{\frac{N}{2s}}-\begin{cases}
 c \va^{\eta_\mu(N-2s)}-c\va^{2s},& \text { if } \eta_\mu<\frac{2s}{N-2s},\\
 c|\log\va|\va^{2s}, &\text { if } \eta_\mu=\frac{2s}{N-2s},\\ -c\va^{\eta_\mu(N-2s)}+c\va^{2s},& \text { if } \eta_\mu>\frac{2s}{N-2s},
 \end{cases}\\
 &<\frac{s}{N}S^{\frac{N}{2s}}.
 \end{align*}
 This completes the proof of (\ref{1.20jj}). 
 

\end{document}